\newtheorem{theorem}{Theorem}
\newtheorem{proposition}[theorem]{Proposition}
\newtheorem{corollary}[theorem]{Corollary}
\newtheorem{lemma}[theorem]{Lemma}
\theoremstyle{definition}
\newtheorem{definition}[theorem]{Definition}
\newtheorem{example}[theorem]{Example}
\newcounter{que}
\newtheorem{question}[que]{Question}
\newcommand{\R}{\mathbb R}
\newcommand{\N}{\mathbb N}
\newcommand{\setsep}{:\;}
\newcommand{\dens}{\operatorname{dens}}
\newcommand{\cf}{\operatorname{cf}}
\newcommand{\ind}{\operatorname{ind}}
\def\C{\mathcal{C}}
\def\F{\mathcal{F}}
\def\U{\mathcal{U}}
\def\NN{\mathcal{N}}
\begin{document}
\title[Large separated sets of unit vectors]{Large separated sets of unit vectors in Banach spaces of continuous functions}

\author{Marek C\'uth}
\author{Ond\v{r}ej Kurka}
\author{Benjamin Vejnar}
\email{cuth@karlin.mff.cuni.cz}
\email{kurka.ondrej@seznam.cz}
\email{vejnar@karlin.mff.cuni.cz}
\address[M.~C\' uth, O.~Kurka, B.~Vejnar]{Charles University, Faculty of Mathematics and Physics, Department of Mathematical Analysis, Sokolovsk\'a 83, 186 75 Prague 8, Czech Republic}
\address[O.~Kurka]{Institute of Mathematics of the Czech Academy of Sciences, \v{Z}itn\'a 25, 115 67 Prague 1, Czech Republic}

\subjclass[2010]{46B20, 46B04, 46E15 (primary), and 54D30, 46B26 (secondary)}

\thanks{M.~C\'uth and B.~Vejnar are junior researchers in the University Centre for Mathematical Modelling, Applied Analysis and Computational Mathematics (MathMAC). M.~C\'uth and B.~Vejnar were supported by the Research grant GA\v{C}R 17-04197Y. O.~Kurka was supported by the Research grant GA\v{C}R 17-04197Y and by RVO: 67985840}

\keywords{equilateral set, $(1+)$-separated set, Banach space of continuous functions}

\begin{abstract}The paper concerns the problem of whether a nonseparable $\C(K)$ space must contain a set of unit vectors whose cardinality equals the density of $\C(K)$, and such that the distances between every two distinct vectors are always greater than one. We prove that this is the case if the density is at most continuum, and that for several classes of $\C(K)$ spaces (of arbitrary density) it is even possible to find such a set which is $2$-equilateral; that is, the distance between every two distinct vectors is exactly 2.
\end{abstract}
\maketitle

In this paper we deal with distances between unit vectors in Banach spaces. For $r\in\R$ a set $A$ in a Banach space $X$ is said to be \emph{$r$-separated}, \emph{$(r+)$-separated} and \emph{$r$-equilateral} if $\|v_1 - v_2\| \geq r$,  $\|v_1 - v_2\| > r$ and $\|v_1 - v_2\| = r$ for distinct $v_1, v_2\in A$, respectively.

A natural question considered in the literature is whether, given a Banach space, there is a big equilateral set or there is a big $(1+)$-separated set in the unit sphere of the space. Our investigation is motivated mainly by the recent papers \cite{KaniaKochanek,Koszmider,MV15}, where this question has been addressed also for nonseparable Banach spaces of the form $\C(K)$, where $\C(K)$ is the Banach space of all continuous functions on a compact space $K$ considered with the supremum norm.

Let us summarize what is known. By \cite[Theorem 2.6]{MV15}, the existence of an $r$-separated set of cardinality $\kappa$ in the unit ball of $\C(K)$ for some $r>1$ is equivalent to the existence of a $2$-equilateral set of cardinality $\kappa$ in the unit sphere of $\C(K)$. Hence, in $\C(K)$ spaces it suffices to consider the problem of the existence of big $(1+)$-separated sets and $2$-equilateral sets in the sphere. By \cite{Koszmider}, it is undecidable in ZFC whether there always exists an uncountable equilateral set in a nonseparable $\C(K)$ space. On the other hand, by \cite{MV15} and \cite{KaniaKochanek}, there always exists an uncountable $(1+)$-separated set in the unit sphere of a nonseparable $\C(K)$ space. Moreover, if $K$ is nonmetrizable and not perfectly normal, then there exists an uncountable $2$-equilateral set in the unit sphere \cite{MV15}, and if $K$ is perfectly normal, then there exists a $(1+)$-separated set in the unit sphere of cardinality equal to the density of $\C(K)$, see \cite{KaniaKochanek} (by density we understand the minimal cardinality of a dense subset). It is mentioned in \cite[page 40]{KaniaKochanek} that the following is a ``tantalising problem'' left open by the authors.

\begin{question}\label{q:jednaPlus}Let $K$ be a compact Hausdorff space with $\kappa:=\dens \C(K)>\omega$. Does there exist a $(1+)$-separated set in the unit sphere of $\C(K)$ of cardinality $\kappa$?
\end{question}

We were not able to answer this question. However, we prove that for quite many classes of compact spaces the answer is positive.

Let us emphasize that all compact spaces of our considerations are supposed to be infinite and Hausdorff. Recall that if $K$ is a compact Hausdorff space then the weight of $K$ (denoted by $w(K)$) is equal to the density of $\C(K)$. Our main results read as follows.

\begin{theorem}\label{t:main1}Let $K$ be a compact space such that $w(K)$ is at most continuum. Then the unit ball of $\C(K)$ contains a $(1+)$-separated set of cardinality $w(K)$.
\end{theorem}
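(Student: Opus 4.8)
The plan is to dispatch the easy cases, reduce the rest to a transfinite recursion, and pinpoint where $w(K)\le\mathfrak c$ is used. Write $\kappa:=w(K)$. If $\kappa=\omega$ then $\C(K)$ is an infinite-dimensional separable space, so its sphere contains a countable $(1+)$-separated set by Kottman's theorem; hence assume $\kappa>\omega$. Next, if $K$ has a left- or right-separated subspace $\{z_\alpha:\alpha<\kappa\}$ — say right-separated, so $z_\alpha\notin\overline{\{z_\beta:\beta>\alpha\}}$ — then by Urysohn's lemma one can pick, for each $\alpha$, a norm-one $f_\alpha\in\C(K)$ with $f_\alpha(z_\alpha)=1$ and $f_\alpha\equiv-1$ on $\overline{\{z_\beta:\beta>\alpha\}}$; for $\beta>\alpha$ one has $f_\alpha(z_\beta)=-1$ while $f_\beta(z_\beta)=1$, so $\|f_\alpha-f_\beta\|=2$ and $\{f_\alpha:\alpha<\kappa\}$ is even $2$-equilateral of size $\kappa$. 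So I may assume $K$ has no left- or right-separated subspace of size $\kappa$ (in particular no discrete subspace of size $\kappa$), and since the conclusion is already known for perfectly normal $K$, also that $\hl(K)>\omega$.

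In the remaining case I build the $(1+)$-separated family $\{f_\alpha:\alpha<\kappa\}$ in the unit sphere by recursion on $\alpha$. Suppose $\{f_\beta:\beta<\alpha\}$ are chosen, $|\alpha|<\kappa$; I need $f_\alpha$ in the sphere with $\|f_\alpha-f_\beta\|>1$ for all $\beta<\alpha$. For each $\beta<\alpha$ fix $z_\beta\in K$ with $|f_\beta(z_\beta)|=1$ and set $\eta_\beta=f_\beta(z_\beta)\in\{-1,1\}$. The key sub-goal is to choose the $z_\beta$ so that $P:=\{z_\beta:\eta_\beta=1\}$ and $N:=\{z_\beta:\eta_\beta=-1\}$ have disjoint closures; then Urysohn's lemma yields $g$ in the unit ball with $g\equiv-1$ on $\overline P$ and $g\equiv1$ on $\overline N$, whence $|g(z_\beta)-f_\beta(z_\beta)|=2$ and so $\|g-f_\beta\|>1$ for every $\beta<\alpha$, and since such a $g$ already lies in the sphere I may take $f_\alpha=g$. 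Thus the whole construction reduces to carrying this separation of "peak points" through all $\kappa$ stages.

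The hard part — and, I expect, the only place $\kappa\le\mathfrak c$ is genuinely needed — is exactly this. Two ingredients seem required. First, because only $|\alpha|<\kappa=w(K)=\dens\C(K)$ functions have been used, the topology they generate is strictly coarser than that of $K$: there are $x\neq x'$ with $f_\beta(x)=f_\beta(x')$ for all $\beta<\alpha$, and since $\kappa>\omega$ one can even find a whole infinite compact set on which all earlier $f_\beta$ are constant — a reservoir of "fresh" points, unseen by the functions so far, into which the new peak points and the separating set can be steered. Second, to make these local choices cohere over $\kappa$ steps one should fix in advance — using only $\kappa\le\mathfrak c$ — an independent family of size $\mathfrak c$ on $\omega$ (equivalently, work along the branches of $2^{<\omega}$), attach the $\alpha$-th branch to the $\alpha$-th stage, and use the (almost) disjointness of the associated "coordinate" closed subsets of $K$ to keep the ever-growing sets $P$ and $N$ apart. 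I expect the delicate point to be reconciling these two ingredients — in particular, handling a stage $\beta$ where $f_\beta$ attains only one of the values $\pm1$, so that the sign $\eta_\beta$ of its peak is forced and cannot be used freely to balance $P$ against $N$; it is presumably this that prevents a uniform lower bound on the separation and leaves only $(1+)$-separation, consistently with Koszmider's theorem that an uncountable equilateral set need not exist.
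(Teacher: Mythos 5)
Your opening reductions are fine (Kottman's theorem for $\kappa=\omega$, and the left/right-separated case, which is exactly Proposition~\ref{prop:separated}), but the heart of your argument --- the recursion step --- is only a plan, and the plan as stated cannot be completed. If at every stage $\alpha<\kappa$ you could choose peak points $z_\beta$ so that $P$ and $N$ have disjoint closures, your Urysohn function $g$ would satisfy $\|g-f_\beta\|=2$ for all $\beta<\alpha$, and the recursion would produce a $2$-equilateral set of cardinality $w(K)$ in \emph{every} nonmetrizable compact space; by Koszmider's consistency result this already fails for some $K$ with $w(K)=\omega_1\le\mathfrak c$. So the sub-goal to which you reduce everything is not achievable in general, and no bookkeeping with independent families can repair it; the obstruction is not merely the case where some $f_\beta$ attains only one of the values $\pm1$. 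A second concrete error: from $|\alpha|<w(K)$ you may conclude only that the functions $\{f_\beta\}_{\beta<\alpha}$ fail to separate some pair of points; your stronger claim that there is an infinite compact set on which all of them are constant is false (take $K$ the double arrow space and for $\{f_\beta\}$ a countable family inducing the projection onto $[0,1]$: every equivalence class has at most two points). Finally, the hypothesis $w(K)\le\mathfrak c$ never actually enters your argument --- you only ``expect'' it to be needed --- so nothing in the proposal explains why the theorem should be true below the continuum but open above it.

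For comparison, the paper derives the statement from a dichotomy valid for all compact $K$ (Theorem~\ref{t:jednaPlus}): either the pointwise condition \eqref{podminkaKK} holds, in which case a \emph{maximal} $(1+)$-separated family of norm-one functions, restricted to those for which $0$ is not a local maximum at any point, must separate the points of $K$ and hence has cardinality $w(K)$ (this extra ``no local maximum at level $0$'' requirement is precisely what upgrades the estimate $\|f-g\|\ge 1$ of the naive maximality argument to $\|f-g\|>1$); or \eqref{podminkaKK} fails, and then one extracts points $x_t$, $t\in(-1,1)$, with $x_t\notin\overline{\{x_s\setsep s>t\}}$, i.e.\ a right-separated family of size $\mathfrak c$, yielding a $2$-equilateral set of cardinality $\mathfrak c$. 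The hypothesis $w(K)\le\mathfrak c$ is used only at the very end, to see that the second horn of the dichotomy also produces a $(1+)$-separated set of cardinality $w(K)$.
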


\begin{theorem}\label{t:main2}Let $K$ be a compact space such that at least one of the following conditions is satisfied.
\begin{itemize}
    \item[(1)] $K$ contains two disjoint homeomorphic compact spaces of the same weight as $K$.
    \item[(1')] $K$ is homogeneous, or homeomorphic 
    to a compact convex set in a locally convex space.
    \item[(2)] $K$ is a compact line (that is, a linearly ordered space with the order topology).
\end{itemize}
Then the unit ball of $\C(K)$ contains a $2$-equilateral set of cardinality $w(K)$.
\end{theorem}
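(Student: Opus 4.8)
The plan is to reduce the statement, in all cases, to a combinatorial fact about closed sets. Call a family $\{(A_\alpha,B_\alpha):\alpha<\kappa\}$ of pairs of disjoint nonempty closed subsets of a compact space $X$ \emph{linked} if $(A_\alpha\cap B_\beta)\cup(A_\beta\cap B_\alpha)\neq\emptyset$ for all $\alpha\neq\beta$. If such a family exists, then Urysohn functions $f_\alpha$ with $f_\alpha\equiv 1$ on $A_\alpha$, $f_\alpha\equiv-1$ on $B_\alpha$ and $\|f_\alpha\|\le1$ form a $2$-equilateral set of cardinality $\kappa$ in the ball of $\C(X)$; conversely, a $2$-equilateral set of size $\kappa$ yields the linked family $\{(f_\alpha^{-1}(1),f_\alpha^{-1}(-1))\}$, because $\|f_\alpha-f_\beta\|=2$ together with $\|f_\alpha\|,\|f_\beta\|\le1$ forces, by compactness, a point at which $f_\alpha$ and $f_\beta$ take the values $1$ and $-1$. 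Moreover, extending each $f_\alpha$ to $K$ with $\|f_\alpha\|\le1$ (Tietze) only increases distances while keeping them $\le2$, so a linked family of size $\kappa$ in a closed subspace $L\subseteq K$ produces one in $K$. It is also convenient to observe that (1') implies (1): if $K$ is homogeneous then every nonempty open set has weight $w(K)$ (finitely many of its homeomorphic copies cover $K$, and the weight of a compact space equals the maximum of the weights of the pieces of a finite closed cover), so a small closed neighbourhood $C$ of a point $x$ and the set $h(C)$, where $h$ is a homeomorphism of $K$ with $h(x)=y\neq x$, are disjoint homeomorphic compacta of weight $w(K)$; and if $K$ is a compact convex set then, for $a\neq b$ in $K$, the affine copies $(1-\varepsilon)a+\varepsilon K$ and $(1-\varepsilon)b+\varepsilon K$ lie in $K$, are homeomorphic to $K$, and (converging to $\{a\}$ and $\{b\}$) are disjoint for small $\varepsilon$.

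For case (1) let $L_1,L_2\subseteq K$ be disjoint with a homeomorphism $\phi\colon L_1\to L_2$ and $w(L_1)=\kappa=w(K)$; by the reduction it suffices to build a linked family of size $\kappa$ in $L_1\cup L_2$. The mechanism is to transplant pairs of disjoint closed sets from $L_1$ onto both copies: for families $\{(P_\alpha,Q_\alpha):\alpha<\kappa\}$ and $\{(P'_\alpha,Q'_\alpha):\alpha<\kappa\}$ of disjoint closed pairs in $L_1$, the pairs $A_\alpha:=P_\alpha\cup\phi(P'_\alpha)$ and $B_\alpha:=Q_\alpha\cup\phi(Q'_\alpha)$ in $L_1\cup L_2$ are automatically disjoint (as $L_1\cap L_2=\emptyset$), and $(A_\alpha,B_\alpha),(A_\beta,B_\beta)$ are linked exactly when $(P_\alpha,Q_\alpha),(P_\beta,Q_\beta)$ are linked in $L_1$ \emph{or} $(P'_\alpha,Q'_\alpha),(P'_\beta,Q'_\beta)$ are. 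So the task reduces to producing, from $w(L_1)=\kappa$ alone, two families of $\kappa$ disjoint closed pairs in $L_1$ whose linking relations on $\kappa$ jointly cover all pairs — equivalently, a graph $D$ on $\kappa$ such that $D$ and its complement are both realised by closed-pair families in $L_1$. I expect this to be the main obstacle of the whole theorem: it is exactly the point where a second homeomorphic copy is indispensable, since for a single $\C(L_1)$ the analogous statement is the open problem of Question~\ref{q:jednaPlus}. I would obtain such a $D$ by distinguishing the case $d(L_1)\ge w(L_1)$, where a transfinite recursion builds the linked family in $L_1\cup L_2$ directly and the two copies supply the room needed to keep the closures of the ``$+1$-sides'' and the ``$-1$-sides'' apart, from the case $d(L_1)<w(L_1)$, handled more directly (e.g.\ a zero-dimensional such $L_1$ maps onto $\{0,1\}^\kappa$, where even one family suffices since all coordinate functions are pairwise at distance $2$).

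For case (2), let $K$ be a compact linearly ordered space with $w(K)=\kappa$. If $K$ has $\kappa$ jumps $(a,b)$ (with $b$ the successor of $a$), then each $\{x\le a\}$ is clopen and the $\{-1,1\}$-valued continuous functions $f_{(a,b)}=-\chi_{\{x\le a\}}+\chi_{\{x\ge b\}}$ are pairwise at distance $2$: two distinct jumps are linearly ordered, and at the lower endpoint of the higher one the lower function has value $1$ while the higher has value $-1$. So assume $K$ has fewer than $\kappa$ jumps; collapsing all of them to points gives a compact LOTS $K'$ with no jumps — hence connected — and $w(K')=\kappa$, since the collapsing map contributes only $<\kappa$ new clopen cuts and so $w(K)$ is at most $w(K')$ plus the number of jumps. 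A linked family of size $\kappa$ in $K'$ pulls back to one in $K$, so we may assume $K$ is a compact connected LOTS with $w(K)=\kappa$; then also $d(K)=\kappa$.

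Fix a dense set $\{d_\alpha:\alpha<\kappa\}$ in the connected compact LOTS $K$ and build a binary tree $\mathcal T$ of open intervals: the root is $K$, and at step $\alpha$ one splits the unique current leaf containing $d_\alpha$ into its two halves below and above $d_\alpha$. This produces $\kappa$ internal nodes; discard the root. For an internal node $I=(a_I,b_I)$ with split point $p_I=d_{\alpha(I)}$, set $B_I:=K\setminus I$ (closed, as $I$ is open) and let $A_I$ be a nondegenerate closed interval with $p_I$ in its interior and $A_I\subseteq I$ (possible by connectedness). Then $A_I\cap B_I=\emptyset$; and for distinct internal nodes $I,J$: if their intervals are disjoint, the set $A$ of the lower node lies in the set $B$ of the higher one (it is strictly below the lower endpoint of the higher interval, which belongs to that $B$), so the pairs are linked; and if $I\subsetneq J$, then $p_J\in A_J$ but $p_J\notin I$ (a split point lies in neither child of its node, hence in no descendant), so $p_J\in A_J\cap B_I$ — symmetrically if $J\subsetneq I$. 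Hence $\{(A_I,B_I):I\text{ internal}, I\neq\text{root}\}$ is a linked family of size $\kappa$, which completes case (2).
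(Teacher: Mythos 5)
The decisive gap is in case (1), which is the heart of the theorem. Your reduction to ``linked families'' of closed pairs and the transplanting mechanism $A_\alpha=P_\alpha\cup\phi(P'_\alpha)$, $B_\alpha=Q_\alpha\cup\phi(Q'_\alpha)$ are correct, but at the crucial point --- producing, from $w(L_1)=\kappa$ alone, two closed-pair families in $L_1$ whose linking relations jointly cover all pairs of indices --- you give only a plan (``I expect\dots'', ``I would obtain\dots''), not an argument. The subcase $\dens L_1\geq w(L_1)$ is indeed harmless (a left-separated family of length $\kappa$ already yields a $2$-equilateral set in $\C(L_1)$ by Proposition~\ref{prop:separated}, with no need for a second copy), but the subcase $\dens L_1<w(L_1)$ is precisely the hard one, and your only remark about it is an example that is neither general nor correct as stated: a zero-dimensional compact space of weight $\kappa$ need not map onto $\{0,1\}^\kappa$ (e.g.\ the scattered space $[0,\omega_1]$ has weight $\omega_1$ and maps onto no nonmetrizable dyadic compactum); for zero-dimensional subspaces one would instead quote Lemma~\ref{l:nuldimenzionalni}. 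The paper closes this gap by an entirely different device (Theorem~\ref{t:disjKopie}): in $\C(K\times\{0,1\})$ it takes a maximal $\tfrac32$-separated family of norm-one functions subject to the side condition $|f(z,0)|<\tfrac12\Rightarrow f(z,1)=-1$, shows by a separation-of-points count that such a maximal family must have cardinality $w(K)$, and then converts $\tfrac32$-separation into $2$-equilaterality via Lemma~\ref{l:staci1+eps}. Nothing in your outline substitutes for this step, so case (1) --- and with it case (1'), whose reduction to (1) you do carry out essentially as in Lemmas~\ref{l:konvexKpkt} and~\ref{l:bodVelkeVahy} --- remains unproved.

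Case (2) also contains a flawed step. Collapsing all jumps of $K$ need not produce a compact LOTS: the equivalence relation generated by identifying the two endpoints of each jump can have non-closed classes (a convex copy of $\omega+1$ inside $K$ contributes only countably many jumps, yet the class of its minimum is $\{0,1,2,\dots\}$, whose closure contains the limit point), so the quotient fails to be Hausdorff, and your weight computation for $K'$ has nothing to apply to. This part is repairable: in the subcase with fewer than $\kappa$ jumps, the base-counting argument in the proof of Theorem~\ref{t:LinearneUsporadane} shows directly that $\dens K=\kappa$, after which one can quote Proposition~\ref{prop:separated} and Lemma~\ref{t:dens}(i), or run your tree construction (which is correct, and a nice explicit alternative) on a connected $K$. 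Your treatment of the ``many jumps'' case via the clopen sets $\{x\leq a\}$ agrees in substance with the paper's use of the set $L$ of points with an immediate predecessor.
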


Theorem~\ref{t:main1} follows from the slightly more general Theorem~\ref{t:jednaPlus}, proved in Section~\ref{section:onePlus}, and Theorem~\ref{t:main2} summarizes the most important new results from Section~\ref{sekceDvaSeparovanost}. Let us note that (1') is a consequence of (1)
(see Lemma~\ref{l:konvexKpkt}). Let us mention that there are more situations where the conclusion of Theorem~\ref{t:main2} is true; namely, it suffices to use certain known results in order to find a big left(right)-separated sequence in $K$ which, as observed already in \cite{MV15}, is sufficient for the existence of a big $2$-equilateral set in the unit ball of $\C(K)$. The cases of when this is possible are summarized in Corollary~\ref{c:densEqualsWeight}.

Note that separable compact spaces and first countable compact spaces are of weight at most continuum and so Theorem \ref{t:main1} applies. Indeed, it is a classical result, see \cite{deGroot} or \cite[Theorem 3.3]{handbookCardinalFunctions}, that for a regular topological space $X$ we have $w(X)\leq 2^{\dens X}$; hence, separable compact spaces have weight at most continuum. If $K$ is a first countable compact space, then by the famous inequality of Arhangel'skii, see \cite{A69} or \cite[Theorem 7.1 and 7.3]{handbookCardinalFunctions}, it has cardinality at most $\mathfrak{c}$. Hence, the weight is at most $\mathfrak{c}$ as well.

Let us remark that our results generalize the result from \cite[Theorem~B(ii)]{KaniaKochanek} mentioned above. Indeed, if $K$ is perfectly normal, it is first countable and we may apply Theorem~\ref{t:main1}.

Our results naturally suggest certain problems/conjectures which we summarize in the last section of this paper. 

\section{Preliminaries}

The notation and terminology is standard; for the undefined notions see \cite{fhhz} for Banach spaces, \cite{eng} for topology and \cite{kunen} for set theory. By $\mathfrak{c}$ we denote the cardinality of continuum. If $X$ is a set and $\kappa$ a cardinal, we denote by $[X]^{\kappa}$ the set of all subsets of $X$ of cardinality $\kappa$. For a cardinal $\kappa$ we denote by $\cf(\kappa)$ its cofinality. If $X$ is a topological space and $A\subset X$, $\overline{A}$ stands for the closure of $A$ and $\dens X$ stands for the density of $X$. The closed unit ball of a Banach space $X$ is denoted by $B_X$. 
In our proofs we use without mentioning the well-known fact that for any compact space $K$ we have
\[
w(K) = \dens \C(K) = \omega + \min \{|\F|\setsep \F\subset \C(K)\text{ separates the points of }K\}.
\]

Let us mention some easy facts which we will use later. First, it is easy to see that if a compact space $K$ is metrizable, then the unit ball of $\C(K)$ contains a $2$-equilateral set of cardinality $w(K)$. For a proof one may, for example, use Lemma~\ref{t:dens}. The other facts are formulated as lemmas below.

\begin{lemma}\label{l:podmnoziny}Let $K$ be a compact space and let $L$ be a closed subset or a continuous image of $K$. If the unit ball of $\C(L)$ contains a $(1+)$-separated (resp. $2$-equilateral) set of cardinality $\kappa$, then the unit ball of $\C(K)$ contains a $(1+)$-separated (resp. $2$-equilateral) set of cardinality $\kappa$.
\end{lemma}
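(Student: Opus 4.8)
The plan is to handle the two cases separately, in each case transporting the given set of unit vectors from $\C(L)$ to $\C(K)$ by a natural map that does not decrease the relevant distances.

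First suppose $L$ is a continuous image of $K$, say $\varphi\colon K\to L$ is a continuous surjection. Then the composition operator $T\colon\C(L)\to\C(K)$ given by $Tf=f\circ\varphi$ is linear, and since $\varphi$ is onto,
\[
\|Tf\|_{\C(K)}=\sup_{x\in K}|f(\varphi(x))|=\sup_{y\in L}|f(y)|=\|f\|_{\C(L)};
\]
hence $T$ is an isometric linear embedding, in particular injective. Consequently, if $\{f_\alpha\colon\alpha<\kappa\}\subset B_{\C(L)}$ is $(1+)$-separated (resp.\ $2$-equilateral), then $\{Tf_\alpha\colon\alpha<\kappa\}\subset B_{\C(K)}$ is a set of the same cardinality with the same property.

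Now suppose $L$ is a closed subset of $K$ and let $\{f_\alpha\colon\alpha<\kappa\}\subset B_{\C(L)}$ have the required property. By the Tietze extension theorem, for each $\alpha<\kappa$ there is $g_\alpha\in\C(K)$ with $g_\alpha|_L=f_\alpha$ and $\|g_\alpha\|_{\C(K)}\le1$. For distinct $\alpha,\beta$ we have $\|g_\alpha-g_\beta\|_{\C(K)}\ge\|f_\alpha-f_\beta\|_{\C(L)}$, since the supremum over $K$ dominates the supremum over the subset $L$. This immediately gives that the $g_\alpha$ are pairwise distinct, so $\{g_\alpha\colon\alpha<\kappa\}$ has cardinality $\kappa$, and that $(1+)$-separation is preserved. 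If in addition $\|f_\alpha-f_\beta\|_{\C(L)}=2$, then combining this inequality with $\|g_\alpha-g_\beta\|_{\C(K)}\le\|g_\alpha\|_{\C(K)}+\|g_\beta\|_{\C(K)}\le2$ forces $\|g_\alpha-g_\beta\|_{\C(K)}=2$, so $2$-equilaterality is preserved as well.

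I do not expect any real obstacle here: the argument rests only on the elementary facts that precomposition with a surjection is a linear isometry and that Tietze extensions can be chosen norm-nonincreasing. The single point deserving a moment's attention is that in the closed-subspace case the extension might a priori enlarge some distances; this is harmless for $(1+)$-separation, and for $2$-equilaterality it is pinned down from above by the triangle inequality together with the norm bound $\|g_\alpha\|_{\C(K)}\le1$.
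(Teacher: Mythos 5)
Your proof is correct and follows the same route as the paper, which merely cites Tietze's theorem for the closed-subset case and the isometric embedding $f\mapsto f\circ\varphi$ for the continuous-image case; you simply supply the details the paper leaves implicit. The one point worth making explicit --- that a norm-nonincreasing Tietze extension cannot push a distance above $2$, so $2$-equilaterality survives --- is handled correctly by your triangle-inequality argument.
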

\begin{proof}If $L\subset K$ then we conclude the proof using Tietze's extension theorem. If $\varphi:K\to L$ is continuous and surjective we realize that $\C(L)$ is isometric to a subspace of $\C(K)$ by the mapping $f\mapsto f\circ\varphi$.
\end{proof}
\begin{lemma}[\protect{\cite[Theorem 2.9(i)]{MV15}}]\label{l:nuldimenzionalni}If a compact space $K$ contains a zero-dimensional compact subspace of weight $w(K)$, then the unit ball of $\C(K)$ contains a $2$-equilateral set of cardinality $w(K)$.
\end{lemma}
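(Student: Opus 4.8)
The plan is to reduce everything, via Lemma~\ref{l:podmnoziny}, to the zero-dimensional compact subspace itself, and there to build the $2$-equilateral set out of $\{-1,1\}$-valued functions. So let $L\subseteq K$ be a zero-dimensional compact subspace with $w(L)=w(K)=:\kappa$. Since $L$ is a closed subspace of $K$, Lemma~\ref{l:podmnoziny} reduces the assertion to finding a $2$-equilateral set of cardinality $\kappa$ in $B_{\C(L)}$.

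First I would count the clopen subsets of $L$. By zero-dimensionality the clopen subsets of $L$ form a base for its topology, and a base of an infinite space can never have cardinality smaller than the weight, so there are at least $w(L)=\kappa$ clopen subsets of $L$. As $\kappa\geq\omega$, discarding the two trivial clopen sets $\emptyset$ and $L$ still leaves at least $\kappa$ of them, so I can fix a family $\mathcal U$ of clopen subsets of $L$ with $|\mathcal U|=\kappa$ and $\emptyset\neq U\neq L$ for every $U\in\mathcal U$.

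Then, writing $\chi_A$ for the characteristic function of $A\subseteq L$, I would put $f_U:=\chi_U-\chi_{L\setminus U}$ for $U\in\mathcal U$. Each $f_U$ is continuous because $U$ is clopen, and $\|f_U\|=1$ because $f_U$ takes only the values $\pm1$ and both are attained (this uses $\emptyset\neq U\neq L$). If $U,V\in\mathcal U$ are distinct, then $U\setminus V$ or $V\setminus U$ is nonempty, and $f_U-f_V=2(\chi_U-\chi_V)$ attains $2$ or $-2$ at any point of such a set, while clearly $\|f_U-f_V\|\leq2$; hence $\|f_U-f_V\|=2$. Thus $\{f_U\setsep U\in\mathcal U\}\subseteq B_{\C(L)}$ is $2$-equilateral of cardinality $\kappa$, and one more application of Lemma~\ref{l:podmnoziny} delivers the desired set in $B_{\C(K)}$.

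The only step with any real content is the passage from the weight of $L$ to the existence of $\kappa$ nontrivial clopen subsets, and that is immediate from the remark that clopen sets form a base; so I do not expect a serious obstacle. The one pitfall to avoid is working with the characteristic functions $\chi_U$ themselves, which only yields a $1$-equilateral set — replacing $\chi_U$ by $\chi_U-\chi_{L\setminus U}$ is precisely what doubles all the pairwise distances.
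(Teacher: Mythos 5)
Your proof is correct: reducing to the zero-dimensional subspace $L$ via Lemma~\ref{l:podmnoziny}, extracting $w(L)$ many nontrivial clopen sets from the clopen base, and taking the functions $\chi_U-\chi_{L\setminus U}$ gives exactly the desired $2$-equilateral set. The paper itself does not prove this lemma but quotes it from \cite[Theorem 2.9(i)]{MV15}, and your argument is essentially the standard one behind that citation, so there is nothing to correct.
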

\begin{lemma}[\protect{\cite[Theorem 2.6]{MV15}}]\label{l:staci1+eps}Let $K$ be a compact space and $\kappa$ be an infinite cardinal. Then the unit ball of $\C(K)$ contains a $2$-equilateral set of cardinality $\kappa$ if and only if it contains a $(1+\varepsilon)$-separated set of cardinality $\kappa$ for some $\varepsilon > 0$.
\end{lemma}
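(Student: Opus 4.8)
The forward implication is immediate: a $2$-equilateral set is in particular $(1+\varepsilon)$-separated for $\varepsilon=\tfrac12$, all mutual distances being equal to $2>\tfrac32$. So the content is the reverse implication, and the plan is to turn a given $(1+\varepsilon)$-separated family into a $2$-equilateral one of the same cardinality by postcomposing each function with a single fixed continuous ``clipping'' map.

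Concretely, let $\{f_i:i\in I\}\subseteq B_{\C(K)}$ be $(1+\varepsilon)$-separated with $|I|=\kappa$ (we work with real scalars, so $f_i(K)\subseteq[-1,1]$). Fix the continuous function $\psi\colon\R\to[-1,1]$ given by $\psi(t)=\max\{-1,\min\{1,t/\varepsilon\}\}$, which satisfies $\psi(t)=1$ for $t\ge\varepsilon$ and $\psi(t)=-1$ for $t\le-\varepsilon$, and set $g_i:=\psi\circ f_i\in B_{\C(K)}$. The one elementary point to check is that for $i\ne j$ there is $x\in K$ with $|f_i(x)-f_j(x)|>1+\varepsilon$, and since $f_i(x),f_j(x)\in[-1,1]$ this forces, after possibly swapping $i$ and $j$, that $f_i(x)\ge\varepsilon$ and $f_j(x)\le-\varepsilon$; hence $g_i(x)=1$, $g_j(x)=-1$, so $\|g_i-g_j\|\ge 2$ and therefore $\|g_i-g_j\|=2$.

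Thus $\{g_i:i\in I\}$ is a $2$-equilateral subset of $B_{\C(K)}$; in particular the $g_i$ are pairwise distinct, so this set has cardinality $\kappa$, which is exactly the point that could have gone wrong and is now automatic from the $2$-equilaterality. There is no real obstacle in this argument. One may also observe, although it is not needed here, that any $2$-equilateral set of at least two points in $B_{\C(K)}$ necessarily lies on the unit sphere, since two vectors of norm $<1$ cannot be at distance $2$, which reconciles the ``ball'' and ``sphere'' formulations of the statement.
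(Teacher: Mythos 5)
Your proof is correct, and since the paper only cites this statement from \cite[Theorem 2.6]{MV15} without reproving it, your clipping argument (postcomposing with a fixed piecewise-linear $\psi$ sending $[\varepsilon,1]$ to $1$ and $[-1,-\varepsilon]$ to $-1$) is essentially the standard proof given in that reference. One cosmetic remark: with the paper's convention that $(1+\varepsilon)$-separated means $\|f_i-f_j\|\geq 1+\varepsilon$, the point $x$ (which exists by compactness) satisfies $|f_i(x)-f_j(x)|\geq 1+\varepsilon$ rather than a strict inequality, but your computation $f_i(x)\geq\varepsilon$, $f_j(x)\leq-\varepsilon$ goes through verbatim.
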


\section{Sets separated by more than 1}\label{section:onePlus}
\begin{theorem}\label{t:jednaPlus}Let $K$ be a compact space. Then the unit ball of $\C(K)$ contains a $(1+)$-separated set of cardinality $w(K)$ or it contains a $2$-equilateral set of cardinality $\mathfrak{c}$.
\end{theorem}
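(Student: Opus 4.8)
Write $\kappa=w(K)$. The plan is to run a transfinite construction that either produces a $(1+)$-separated subset of $B_{\C(K)}$ of full length $\kappa$, or breaks down in a way that exposes a $2$-equilateral set of cardinality $\mathfrak c$. The clean input I would use is the observation of \cite{MV15} that a \emph{left-separated} or \emph{right-separated} transfinite sequence in $K$ of length $\lambda$ already yields (via Urysohn functions that are $1$ at the $\lambda$-th point and $-1$ on the closure of the predecessors, resp.\ successors) a $2$-equilateral set of cardinality $\lambda$ in $B_{\C(K)}$.

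\textbf{Reduction and the easy cases.} Fix a point-separating family $(\varphi_\alpha)_{\alpha<\kappa}$ in $B_{\C(K)}$ with values in $[-1,1]$, and prune it (removing each $\varphi_\alpha$ that is constant on every fibre of the map given by $(\varphi_\beta)_{\beta<\alpha}$, which by compactness keeps the family point-separating) so that for every $\alpha$ there are $x_\alpha^{0}\ne x_\alpha^{1}$ in $K$ with $\varphi_\beta(x_\alpha^{0})=\varphi_\beta(x_\alpha^{1})$ for all $\beta<\alpha$ but $\varphi_\alpha(x_\alpha^{0})\ne\varphi_\alpha(x_\alpha^{1})$. Pruning does not lower the cardinality below $\kappa$, since any point-separating subfamily of $\C(K)$ has cardinality at least $w(K)$. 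Through $e=(\varphi_\alpha)_{\alpha<\kappa}$ identify $K$ with a compact subset of $[-1,1]^\kappa$, and write $e_{\le\alpha}$, $e_{<\alpha}$ for the coordinate projections. If $\hd(K)=\kappa$ or $\hl(K)=\kappa$, then $K$ carries a left- or right-separated sequence of length $\kappa$ and we get a $2$-equilateral (hence $(1+)$-separated) set of cardinality $\kappa$. Likewise, if $\max\{\hd(K),\hl(K)\}\ge\mathfrak c$, we obtain a left- or right-separated sequence of length $\mathfrak c$ and hence a $2$-equilateral set of cardinality $\mathfrak c$. Using the classical estimate $w(K)\le 2^{\,s(K)}\le 2^{\max\{\hd(K),\hl(K)\}}$, the only remaining situation is $\hd(K),\hl(K)<\mathfrak c$ together with $w(K)\le\mathfrak c$ (in particular $K$ is then ccc, being a subspace of the ccc cube $[-1,1]^{\mathfrak c}$).

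\textbf{The construction and the obstruction.} In that remaining case, try to build $(g_\alpha)_{\alpha<\kappa}$ in $B_{\C(K)}$ by recursion, maintaining: $g_\alpha$ factors through $e_{\le\alpha}$; $g_\alpha(y_\alpha)=1$ for some $y_\alpha\in\{x_\alpha^{0},x_\alpha^{1}\}$; and $g_\alpha\le-\tfrac12$ on the closed set $\{g_\beta=1\}$ for every $\beta<\alpha$. The factoring property together with the choice of the splitting pairs forces $g_\beta(x_\alpha^{0})=g_\beta(x_\alpha^{1})$ for $\beta<\alpha$, so as long as one of $x_\alpha^{0},x_\alpha^{1}$ avoids $F_\alpha:=\overline{\bigcup_{\beta<\alpha}\{g_\beta=1\}}$, normality provides $g_\alpha$ (Urysohn on $e_{\le\alpha}(K)$, pulled back). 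If this succeeds up to $\kappa$, then for $\beta<\alpha$ we get $\|g_\alpha-g_\beta\|\ge|g_\alpha(y_\alpha)-g_\beta(y_\alpha)|\ge\tfrac32>1$, so $\{g_\alpha:\alpha<\kappa\}$ is a $(1+)$-separated set of cardinality $\kappa$. The other possibility is that the recursion stalls at some $\lambda<\kappa$ with both $x_\lambda^{0},x_\lambda^{1}\in F_\lambda$; unwinding this forces infinitely many of the earlier $g_\beta$ to peak on a single fibre of $e_{<\lambda}$, and one then converts this entanglement into a genuine zero-dimensional closed subspace (or continuous image) of $K$ of weight $\ge\mathfrak c$ — morally an independent family of $\mathfrak c$ many two-valued functions on a countable set, which is why the cardinal $\mathfrak c$ appears — so that Lemma~\ref{l:nuldimenzionalni} applied to that subspace, together with Lemma~\ref{l:podmnoziny}, produces the $2$-equilateral set of cardinality $\mathfrak c$.

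\textbf{Main obstacle.} I expect the principal difficulty to be exactly this last conversion: formulating the stalling configuration precisely and extracting from it a $2$-equilateral set of cardinality $\mathfrak c$ (equivalently, by Lemma~\ref{l:staci1+eps}, a $(1+\varepsilon)$-separated set of that size), while keeping the cardinality at $\mathfrak c$ rather than something smaller. Some routine extra bookkeeping is needed when $\lambda$ is a limit ordinal and when $\kappa$ is singular (so that "stalling below $\kappa$" must be read as failing to reach cofinally far). Finally, since a $2$-equilateral set is in particular $(1+)$-separated, the case $w(K)\le\mathfrak c$ of Theorem~\ref{t:jednaPlus} gives Theorem~\ref{t:main1}.
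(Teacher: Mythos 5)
Your outline has a genuine gap at exactly the point where the theorem's content lies: the second horn of the dichotomy. When the recursion stalls at $\lambda$ with both $x_\lambda^{0},x_\lambda^{1}\in F_\lambda=\overline{\bigcup_{\beta<\lambda}\{g_\beta=1\}}$, all you know is that this pair lies in the closure of a union of peak sets; nothing in the construction forces ``infinitely many $g_\beta$ to peak on a single fibre,'' let alone produces an independent family or a zero-dimensional closed subspace of weight $\geq\mathfrak c$. In particular there is no visible source for the cardinal $\mathfrak c$ in your configuration (the stalling ordinal $\lambda$ could be countable, and the data involved is a single fibre of $e_{<\lambda}$ plus closures of peak sets). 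You correctly flag this conversion as the main obstacle, but it is not a technical detail to be filled in --- it is the theorem. The paper's mechanism is quite different and does not go through zero-dimensionality at all: it isolates a condition (P1) asserting that for each pair $x\neq y$ there is a norm-one $f$ with $f(x)=1$, $f\equiv-1$ near $y$, and $0$ not a local maximum of $f$ at any point. If (P1) holds, a maximal $(1+)$-separated family of such functions must separate points (the ``no local maximum at level $0$'' clause is what rescues the case $g(x)=g(y)=0$), hence has cardinality $w(K)$. If (P1) fails at a pair $x,y$, then for a witness $f$ \emph{every} level $t\in(-1,1)$ is a local maximum of $f$ at some $x_t$, and the local-maximum neighbourhoods give $x_t\notin\overline{\{x_s\setsep s>t\}}$; this family indexed by the interval $(-1,1)$ is where $\mathfrak c$ comes from, and Urysohn functions on it are $2$-equilateral. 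Your proposal contains no analogue of this step.

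There are also problems in your reduction. The identities $\hd(X)=\sup\{\kappa\setsep\ldots\}$ and $\hl(X)=\sup\{\kappa\setsep\ldots\}$ are suprema that need not be attained, so ``$\hd(K)=\kappa$ or $\hl(K)=\kappa$'' does not by itself yield a left- or right-separated family of length $\kappa$ (the paper is careful to assume the invariant is at least a \emph{successor} cardinal when invoking this). More seriously, $\hd(K),\hl(K)<\mathfrak c$ does not imply $w(K)\leq\mathfrak c$: the bound $w(K)\leq 2^{\dens K}\leq 2^{\hd(K)}$ only gives $w(K)\leq 2^{\lambda}$ for some $\lambda<\mathfrak c$, and $2^{\omega_1}>\mathfrak c$ is consistent with $\omega_1<\mathfrak c$. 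So even the case analysis framing the recursion does not close. (A small slip elsewhere: the separation estimate for $\beta<\alpha$ should be evaluated at the peak point $y_\beta$ of $g_\beta$, where $g_\beta(y_\beta)=1$ and $g_\alpha(y_\beta)\leq-\tfrac12$, not at $y_\alpha$, where $g_\beta$ is uncontrolled.)
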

\begin{proof}We may without loss of generality assume that $K$ is nonmetrizable, see e.g. Theorem~\ref{t:dens}. Given $f\in\C(K)$ and $x\in K$, we say that $t\in\R$ is a \emph{local maximum of $f$ at $x$} if $f(x)=t$ and there exists an open neighbourhood $U$ of $x$ such that $f(y) \leq t$ for every $y\in U$. Let us consider the following condition inspired by the proof of \cite[Theorem 4.11]{KaniaKochanek}:
\begin{equation*}\tag{P1}\label{podminkaKK}
\begin{split}
&\forall x,y\in K,x\neq y, \quad \exists f\in \; B_{\C(K)}:  f(x)=1,\\ & f(z)=-1\text{ for every $z$ in some neighborhood of }y\\ &\text{and }0\text{ is not a local maximum of }f\text{ at any point}.
\end{split}
\end{equation*}

First, let us assume that condition \eqref{podminkaKK} holds. Take a maximal $(1+)$-separated family $\F$ (with respect to inclusion) of norm-one functions such that $0$ is not a local maximum of any $f\in\F$ at any point. We \emph{claim} that the cardinality of $\F$ equals $w(K)$.

Suppose for contradiction that the cardinality of $\F$ is less than $w(K)$. It follows that $\F$ does not separate the points of $K$. Thus, for some pair of distinct points $x,y\in K$ and every $g\in\F$ we have $g(x)=g(y)$. Since \eqref{podminkaKK} holds, we may find a norm-one function $f\in\C(K)$ such that $f(x)=1$, $f(z)=-1$ in some neighborhood $U$ of $y$ and 0 is not a local maximum of $f$ at any point. Fix any $g\in \F$. If $g(x)=g(y)\neq 0$, then 
$$\|f-g\|\geq \max\{|1 - g(x)|,|-1 - g(y)|\} > 1.$$
If $g(x)=g(y)=0$, since 0 is not a local maximum of $g$ at $y$, there is $y'\in U$ with $g(y')>0$ and we have
$$\|f-g\|\geq|f(y') - g(y')|=|-1-g(y')| > 1.$$
Therefore, we have $\|f-g\|>1$ for any $g\in\F$ which is a contradiction with the  maximality of $\F$.

On the other hand, let us assume that \eqref{podminkaKK} does not hold. Then there is a pair of distinct points $x, y\in K$ which witnesses the negation of \eqref{podminkaKK}. Pick a function $f\in B_{\C(K)}$ such that $f(x)=1$ and $f(z)=-1$ for every $z$ in some neighborhood of $y$. By the choice of the pair $x,y$, we know that every $t\in (-1,1)$ is a local maximum of $f$ at some point $x_t\in K$. Indeed, if $t\in (-1,1)$ is not a local maximum of $f$ at any point, then we can easily modify the function $f$ in such a way that $0$ is not a local maximum at any point and this would contradict the choice of the pair $x,y$.

Hence, for every $t\in(-1,1)$, $f(x_t)=t$ and there exists a neighborhood $U_t$ of $x_t$ with $f(z)\leq t$ for every $z\in U_t$. We have $x_s\notin U_t$ for $s > t$ and thus $x_t\notin \overline{\{x_s\setsep s > t\}}$. Therefore, for every $t\in(-1,1)$, we may pick a function $f_t\in B_{\C(K)}$ with $f_t(x_t)=1$ and $f_t(x_s)=-1$ for every $s>t$. Then $\{f_t\setsep t\in(-1,1)\}$ is a $2$-equilateral set of cardinality $\mathfrak{c}$.
\end{proof}
\section{Equilateral sets}\label{sekceDvaSeparovanost}

In this Section we summarize positive results concerning big $2$-equilateral sets in the unit ball of $\C(K)$. The first subsection summarizes the situations where known techniques may be applied, the second subsection contains new arguments.

\subsection{Big right(left)-separated families}

In this subsection we summarize the situations when we may obtain a $2$-equilateral set of cardinality $\kappa$ in the unit ball of $\C(K)$ using certain arguments from the literature and from \cite{MV15}. Recall that given a  topological space $X$ we say that a family $\{x_\alpha\setsep \alpha<\kappa\}$ is \emph{right-separated}, resp.  \emph{left-separated} if $x_\alpha\notin \overline{\{x_\beta\setsep \alpha< \beta < \kappa\}}$, resp. $x_\alpha\notin \overline{\{x_\beta\setsep \beta < \alpha\}}$ for $\alpha < \kappa$. In \cite{MV15}, the following is proved for $\kappa = \omega_1$, the same argument may be applied to a general ordinal (the proof is so short that we give it here for the convenience of the reader).

\begin{proposition}[\protect{\cite[proof of Theorem 2.9(ii) and (iii)]{MV15}}]\label{prop:separated}Let $K$ be a compact space such that there exists a family $\{x_\alpha\setsep \alpha<\kappa\}\subset K$ which is left-separated or right-separated. Then the unit ball of $\C(K)$ contains a $2$-equilateral set of cardinality $\kappa$.
\end{proposition}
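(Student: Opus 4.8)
The plan is to convert the topological separation of the points $x_\alpha$ into a metric separation of suitably chosen norm-one functions, using that a compact Hausdorff space is normal, so that Urysohn's lemma is available, and that any two functions in $B_{\C(K)}$ are at distance at most $2$.

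First suppose the family is right-separated, that is, $x_\alpha\notin\overline{\{x_\beta\setsep \alpha<\beta<\kappa\}}$ for every $\alpha<\kappa$. Then for each $\alpha$ there is an open set $U_\alpha$ with $x_\alpha\in U_\alpha$ and $x_\beta\notin U_\alpha$ whenever $\beta>\alpha$. The singleton $\{x_\alpha\}$ and the closed set $K\setminus U_\alpha$ are disjoint closed subsets of $K$, so by Urysohn's lemma I pick $f_\alpha\in B_{\C(K)}$ with $f_\alpha(x_\alpha)=1$ and $f_\alpha\equiv -1$ on $K\setminus U_\alpha$; in particular $f_\alpha(x_\beta)=-1$ for all $\beta>\alpha$. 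Now for $\alpha<\beta<\kappa$ we have $f_\alpha(x_\beta)=-1$ while $f_\beta(x_\beta)=1$, hence $\|f_\alpha-f_\beta\|\geq |f_\alpha(x_\beta)-f_\beta(x_\beta)|=2$, and since $f_\alpha,f_\beta\in B_{\C(K)}$ the reverse inequality $\|f_\alpha-f_\beta\|\leq 2$ is automatic. Thus $\{f_\alpha\setsep \alpha<\kappa\}$ is $2$-equilateral; in particular the $f_\alpha$ are pairwise distinct, so this set has cardinality $\kappa$.

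The left-separated case is symmetric. From $x_\alpha\notin\overline{\{x_\beta\setsep \beta<\alpha\}}$ one gets an open $U_\alpha\ni x_\alpha$ omitting every $x_\beta$ with $\beta<\alpha$, and the same Urysohn construction produces $f_\alpha\in B_{\C(K)}$ with $f_\alpha(x_\alpha)=1$ and $f_\alpha(x_\beta)=-1$ for all $\beta<\alpha$. For $\alpha<\beta$ one then reads off $\|f_\alpha-f_\beta\|\geq |f_\beta(x_\alpha)-f_\alpha(x_\alpha)|=2$, and again equality follows, giving a $2$-equilateral set of cardinality $\kappa$.

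I do not expect a genuine obstacle here; the only points requiring a moment's care are that it is the compactness-plus-Hausdorff hypothesis that licenses the use of Urysohn's lemma, and that no assumption on $\kappa$ (such as regularity of its cofinality) is needed, since the whole argument is carried out one pair $\alpha<\beta$ at a time.
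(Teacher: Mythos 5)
Your proof is correct and is essentially the same as the paper's: the paper also picks, for each $\alpha$, a norm-one $f_\alpha$ with $f_\alpha(x_\alpha)=1$ and $f_\alpha(x_\beta)=-1$ for all later (resp.\ earlier) indices, and reads off the distance $2$ at the relevant point; you have merely made explicit the Urysohn-lemma step that justifies the existence of such $f_\alpha$.
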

\begin{proof}If the family is right-separated, for each $\alpha<\kappa$ we pick a norm-one function $f_\alpha$ such that $f_\alpha(x_\alpha) = 1$ and $f_\alpha(x_\beta) = -1$ for $\alpha<\beta<\kappa$. If the family is left-separated, for each $\alpha<\kappa$ we pick a norm-one function $f_\alpha$ such that $f_\alpha(x_\alpha) = 1$ and $f_\alpha(x_\beta) = -1$ for $\beta<\alpha$. Then $\{f_\alpha\setsep \alpha < \kappa\}$ is a $2$-equilateral set.
\end{proof}

There is a connection between the existence of left-separated and right-separated families and certain cardinal invariants. Given a topological space $X$, the \emph{hereditary density} of $X$ is defined as $hd(X) = \sup\{\dens(Y)\setsep Y\subset X\}$ and the \emph{hereditary Lindel\"of degree} of $X$ is $hL(X) = \sup \{L(Y)\setsep Y\subset X\}$, where $L(Y)$ is the Lindel\"of degree of $Y$, i.e. the minimal cardinal $\kappa\geq\omega$ such that each  open cover of $Y$ has a subcover of cardinality $\kappa$. It is well-known, see e.g. \cite[page 4]{T14}, that we have 
\[\begin{split}
hd(X) & = \sup\{\kappa\setsep\text{ there is a left-separated family of cardinality $\kappa$ in }X\},\\
hL(X) & = \sup\{\kappa\setsep\text{ there is a right-separated family of cardinality $\kappa$ in }X\}.
\end{split}\]
Hence, if $K$ is compact space such that $hd(K)$ or $hL(K)$ is greater than or equal to a successor cardinal $\kappa$, by Proposition~\ref{prop:separated}, the unit ball of $\C(K)$ contains a $2$-equilateral set of cardinality $\kappa$.

In the following we observe some conditions under which a left (right)-separated family of cardinality $\kappa$ exists. Those are most probably well-known, the proof is short so we include it for the convenience of the reader. Recall that for a closed subset $L$ in a compact space $K$ the \emph{pseudo-character} $\psi(L,K)$ is the minimal cardinal $\kappa$ such that $L$ is the intersection of $\kappa$ many open sets in $K$. 

\begin{lemma}\label{t:dens}Let $K$ be a compact space and $\kappa$ be a cardinal.
\begin{enumerate}[(i)]
	\item\label{densita} If there exists a set $A\subset K$ with $\dens A \geq \kappa$, then there exists a left-separated family of cardinality $\kappa$.
    \item\label{char} If there exists a closed subset $L\subset K$ with $\psi(L,K) \geq \kappa$, then there exists a right-separated family of cardinality $\kappa$.
\end{enumerate}
\end{lemma}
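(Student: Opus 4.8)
The plan is to prove both parts by transfinite recursion of length $\kappa$, building the separated family $\{x_\alpha\setsep\alpha<\kappa\}$ one point at a time; for part~\ref{densita} the recursion invariant involves only the points already chosen, while for part~\ref{char} it additionally carries along a decreasing chain of open neighbourhoods of $L$.

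For part~\ref{densita}, I would choose points $x_\alpha\in A$ for $\alpha<\kappa$ so that $x_\alpha\notin\overline{\{x_\beta\setsep\beta<\alpha\}}$ (closure in $K$, which is the same as in $A$ once $x_\alpha\in A$). At stage $\alpha<\kappa$ the set $\{x_\beta\setsep\beta<\alpha\}$ has cardinality $|\alpha|<\kappa$, and the assumption $\dens A\geq\kappa$ says exactly that no subset of $A$ of cardinality smaller than $\kappa$ is dense in $A$; hence $\overline{\{x_\beta\setsep\beta<\alpha\}}$ does not cover $A$ and a suitable $x_\alpha$ exists. The resulting $\{x_\alpha\setsep\alpha<\kappa\}$ consists of pairwise distinct points and is left-separated by construction. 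The only thing worth spelling out here is the reformulation of ``$\dens A\geq\kappa$'' as ``every subset of $A$ of size less than $\kappa$ fails to be dense in $A$''.

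For part~\ref{char}, together with $x_\alpha$ I would pick an open set $U_\alpha\supseteq L$ with $x_\alpha\in\bigcap_{\beta<\alpha}U_\beta\setminus L$ and $x_\alpha\notin\overline{U_\alpha}$. The first requirement can be met because $\bigcap_{\beta<\alpha}U_\beta$ is an intersection of fewer than $\kappa$ open supersets of $L$, so by $\psi(L,K)\geq\kappa$ it strictly contains $L$; the second can be met because $K$, being compact Hausdorff and hence normal, separates the disjoint closed sets $L$ and $\{x_\alpha\}$ by open sets. To check that $\{x_\alpha\setsep\alpha<\kappa\}$ is right-separated, observe that for $\alpha<\beta<\kappa$ we have $x_\beta\in\bigcap_{\gamma<\beta}U_\gamma\subseteq U_\alpha$, hence $\overline{\{x_\beta\setsep\alpha<\beta<\kappa\}}\subseteq\overline{U_\alpha}$, which does not contain $x_\alpha$; the same inclusion shows that the $x_\alpha$ are pairwise distinct, so we obtain a family of cardinality $\kappa$.

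I do not expect a genuine obstacle: both arguments are routine transfinite recursions. The step deserving the most care is the recursion step of part~\ref{char}, where one must produce simultaneously a new point outside $L$ (using the pseudo-character hypothesis) and an open neighbourhood of $L$ whose closure misses that point (using normality of $K$); one should also verify the bookkeeping at limit stages and dispose of the trivial case $L=K$ separately.
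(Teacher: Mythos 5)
Your proposal is correct and follows essentially the same argument as the paper: part (i) by transfinite recursion using that no set of size less than $\kappa$ is dense in $A$, and part (ii) by recursively choosing $x_\alpha\in\bigcap_{\beta<\alpha}U_\beta\setminus L$ together with an open $U_\alpha\supset L$ whose closure misses $x_\alpha$. The extra points you flag (reformulating $\dens A\geq\kappa$, using regularity/normality of $K$, and the case $L=K$) are exactly the routine details the paper leaves implicit.
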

\begin{proof}If there is $A\subset K$ with $\dens A\geq \kappa$, we inductively find points $\{x_\alpha\setsep \alpha < \kappa\}\subset A$ such that $x_\alpha\notin \overline{\{x_\beta\setsep \beta < \alpha\}}$.

Let us assume that there exists a closed subset $L\subset K$ with $\psi(L,K) \geq \kappa$. Then $L$ is not the intersection of less than $\kappa$ open sets. We shall inductively find points $x_\alpha$ and open sets $U_\alpha$ for $\alpha < \kappa$ such that $L\subset U_{\alpha}$ and
\begin{equation}\label{eq:bodyProCharacter2}x_\alpha\in \bigcap_{\beta < \alpha}(U_{\beta}\setminus\overline{U_{\alpha}}).\end{equation}

Pick $x_0\notin L$ and an open $U_0\supset L$ with $x_{0}\notin\overline{U_0}$. Having chosen $x_\beta$ and $U_\beta$ for every $\beta < \alpha$, we pick a point $x_\alpha\in \bigcap_{\beta < \alpha}U_{\beta}\setminus L$ and then we find $U_\alpha\supset L$ such that $x_\alpha\notin \overline{U_\alpha}$. In this way we have picked all the $x_\alpha$'s and, by \eqref{eq:bodyProCharacter2}, we have $x_\beta\notin\overline{\{x_\alpha\setsep \alpha > \beta\}}$ for every $\beta < \kappa$. \end{proof}

Note that we may apply the approach above to several classes of compact spaces which include also classes studied in functional analysis. For a survey about Valdivia and Corson compacta we refer to \cite{kalendaSurvey}, for information about Eberlein compacta to \cite{fabCervenaKniha}. Let us recall that a cardinal $\kappa$ is a \emph{strong limit cardinal} if $2^\lambda < \kappa$ whenever $\lambda < \kappa$.

The following was suggested to us by O.~Kalenda.

\begin{lemma}\label{l:contImageOfValdivia} Let $K$ be a continuous image of a Valdivia compact space $L$. Then there exists a set $A\subset K$ with $\dens A \geq w(K)$.
\end{lemma}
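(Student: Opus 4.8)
The plan is to take for $A$ the image under $\varphi$ of the canonical dense subset of a Valdivia preimage of $K$, and to bound its density from below by a support‑counting argument. First I would dispose of the case $w(K)=\omega$: then $K$ is metrizable and infinite, so $\dens K=w(K)$ and $A:=K$ works. So assume $\kappa:=w(K)>\omega$. Fix an embedding $L\subseteq\R^\Gamma$ witnessing that $L$ is Valdivia, i.e. with $D:=L\cap\Sigma(\Gamma)$ dense in $L$, where $\Sigma(\Gamma):=\{x\in\R^\Gamma:\ \{\gamma\in\Gamma: x(\gamma)\neq0\}\ \text{is countable}\}$. Put $A:=\varphi(D)$; since $D$ is dense in $L$ and $\varphi$ is a continuous surjection, $A$ is dense in $K$. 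It remains to prove $\dens A\geq\kappa$.

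Suppose not, and fix $E\subseteq A$ dense in $A$ with $|E|<\kappa$; as $A$ is dense in $K$, $E$ is dense in $K$. For each $y\in E$ pick $d_y\in D$ with $\varphi(d_y)=y$ and set $D':=\{d_y:y\in E\}$, so $|D'|<\kappa$ and $\varphi(D')=E$. Since $\overline{D'}$ is a compact subset of $L$, the set $\varphi(\overline{D'})$ is closed in $K$ and contains the dense set $E$, hence equals $K$; therefore $w(\overline{D'})\geq w(K)=\kappa$. On the other hand, put $\Gamma':=\bigcup_{d\in D'}\{\gamma\in\Gamma: d(\gamma)\neq0\}$. Each of these sets is countable and $|D'|<\kappa$, so, as $\kappa>\omega$, $|\Gamma'|\le|D'|\cdot\omega<\kappa$. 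Every $d\in D'$ vanishes on $\Gamma\setminus\Gamma'$, hence $D'$ — and so also $\overline{D'}$ — is contained in the closed set $L_0:=\{x\in L:\ x(\gamma)=0\ \text{for every}\ \gamma\in\Gamma\setminus\Gamma'\}$. The coordinate projection $\pi_{\Gamma'}\colon\R^\Gamma\to\R^{\Gamma'}$ is injective on $L_0$, so it maps the compact space $L_0$ homeomorphically onto a subspace of $\R^{\Gamma'}$; thus $w(\overline{D'})\leq w(L_0)\leq|\Gamma'|+\omega<\kappa$ — a contradiction. Hence $\dens A\geq\kappa=w(K)$.

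The only genuine idea is to work with $\varphi(D)$ rather than with $K$ itself: one cannot take $A:=K$, since $\dens K=w(K)$ already fails for Valdivia compacta such as $[0,1]^{\kappa}$, and constructing a left‑separated set of size $w(K)$ directly inside $K$ is possible but more delicate. The step I expect to be the crux is the contradiction — recognising that a hypothetical small dense subset of $\varphi(D)$ lifts to a small subset $D'$ of $D$, and that, because fewer than $\kappa$ points of a $\Sigma$‑product are supported on fewer than $\kappa$ coordinates, $\overline{D'}$ is confined to a coordinate slice of $L$ of weight $<\kappa$ while still surjecting onto $K$. Everything else reduces to standard facts: weight does not increase under continuous surjections of compacta (immediate from $w(\cdot)=\dens\C(\cdot)$) or under passing to a subspace, and a continuous bijection of a compact space onto a Hausdorff space is a homeomorphism.
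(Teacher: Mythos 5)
Your proof is correct and follows essentially the same route as the paper: both take $A=\varphi(D)$ for the dense $\Sigma$-subset $D$ of $L$, lift a dense subset of $A$ of size $\dens A$ to a set $D'\subset D$, observe that $\varphi(\overline{D'})=K$ forces $w(\overline{D'})\geq w(K)$, and then bound $w(\overline{D'})$ from above in terms of $|D'|$. The only difference is that where the paper cites Kalenda's Lemma 3.4 ($\dens D'=w(\overline{D'})$ for a dense $\Sigma$-subset $D'$), you prove the needed inequality directly by the support-counting argument confining $\overline{D'}$ to the slice $L_0\subset\R^{\Gamma'}$ — a correct, self-contained substitute for that citation.
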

\begin{proof}Let $D\subset L$ be a dense $\Sigma$-subset of $L$ and let $\phi:L\to K$ be a continuous surjection. Pick $D':=\{d_i\setsep i\in I\}\subset D$ such that $\{\phi(d_i)\setsep i\in I\}$ is dense in $\phi(D)$ and  $|I| = \dens \phi(D)$. Then $\overline{D'}$ is a Valdivia compact and $D'$ is its dense $\Sigma$-subset; hence, by \cite[Lemma 3.4]{kalendaSurvey}, $\dens D' = w(\overline{D'})$, and we have $\dens \phi(D) = |I| \geq \dens D' = w(\overline{D'})$. Since $\phi(\overline{D'}) = K$, we have $w(\overline{D'})\geq w(K)$. Thus, it suffices to put $A = \phi(D)$.
\end{proof}

Let us end up with a list of several situations where the results of this subsection may be applied. Note that the last case (that is, $K$ homeomorphic to $L\times L$) is essentially proved already in \cite[Corollary 2.11]{MV15}.

\begin{corollary}\label{c:densEqualsWeight}Let $K$ be a compact space. Suppose at least one of the following conditions holds.
\begin{itemize}
\item $K$ is a continuous image of a Valdivia compact space (e.g. $K$ is metrizable, Eberlein, Corson
);
\item $w(K)$ is a strong limit cardinal;
\item $K$ is a connected continuous image of a linearly ordered compact space;
\item $K$ is homeomorphic to $L\times L$ for a compact space $L$.
\end{itemize}
Then the unit ball of $\C(K)$ contains a $2$-equilateral set of cardinality $w(K)$.
\end{corollary}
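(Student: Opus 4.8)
The plan is to treat all four cases in a uniform way: in each I would produce, inside $K$, either a left-separated or a right-separated family of cardinality $w(K)$, and then invoke Proposition~\ref{prop:separated} to obtain the required $2$-equilateral set in $B_{\C(K)}$. Lemma~\ref{t:dens} provides two mechanisms for this. A subspace $A\subseteq K$ with $\dens A\ge w(K)$ gives a left-separated family of size $w(K)$; note that $\dens A\le w(A)\le w(K)$ always, so this condition simply says $\dens A=w(K)$. A closed subspace $F\subseteq K$ with pseudocharacter $\psi(F,K)\ge w(K)$ gives a right-separated family of size $w(K)$.

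The first two cases I would settle by producing a subspace of full density. If $K$ is a continuous image of a Valdivia compact space, then Lemma~\ref{l:contImageOfValdivia} is exactly on point: it directly yields $A\subseteq K$ with $\dens A\ge w(K)$. (In particular this covers metrizable, Eberlein and Corson $K$, these being Valdivia.) If $w(K)$ is a strong limit cardinal, I would use the classical inequality $w(X)\le 2^{\dens X}$ for regular $X$, already recalled in the introduction: were $\dens K<w(K)$, then $w(K)\le 2^{\dens K}<w(K)$ by strong-limitness, which is absurd, so $\dens K=w(K)$ and one takes $A=K$.

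For $K\cong L\times L$ I would instead use the pseudocharacter mechanism. The diagonal $\Delta=\{(x,x):x\in L\}$ is a closed subset of $L\times L$, homeomorphic to $L$, and it is classical that for a compact Hausdorff $L$ one has $\psi(\Delta,L\times L)=w(L)$ (the diagonal number of a compact space equals its weight). Since $w(L\times L)=w(L)$ for infinite $L$, this produces a right-separated family of cardinality $w(K)$; this recovers \cite[Corollary 2.11]{MV15}. Finally, for a connected continuous image $K$ of a compact linearly ordered space, I would return to the density mechanism, the point being that such $K$ satisfy $\dens K=w(K)$. This last fact I would take from the structure theory of continuous images of compact ordered spaces (equivalently, of arcs): the idea is to reduce to the case that $K$ is a connected compact linearly ordered space, for which $\dens K=w(K)$ is immediate since the open intervals with endpoints in a fixed dense set form a base.

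I expect the connected ordered-image case to be the only real difficulty. The other three are, in effect, one-line cardinal computations once Lemma~\ref{t:dens}, Lemma~\ref{l:contImageOfValdivia} and Proposition~\ref{prop:separated} are in hand; but establishing $\dens K=w(K)$ for connected continuous images of compact ordered spaces genuinely requires structural input on continuous images of arcs, and making the reduction to the linearly ordered case weight-preserving is the delicate point I would need to get right.
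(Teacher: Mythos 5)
Your proposal is correct and follows essentially the same route as the paper: Lemma~\ref{t:dens}(i) via $\dens A = w(K)$ for the first three cases (using Lemma~\ref{l:contImageOfValdivia} for Valdivia images and $w(X)\le 2^{\dens X}$ for strong limits), and Lemma~\ref{t:dens}(ii) via the pseudocharacter of the diagonal for $L\times L$. The one fact you flag as delicate — $\dens K = w(K)$ for connected continuous images of compact ordered spaces — is exactly what the paper imports from Treybig \cite{Tr} (note that such images need not themselves be orderable, so your sketched reduction to the ordered connected case would not quite work, but citing the structural result as you propose is precisely what the paper does).
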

\begin{proof}In the first three cases we may apply Proposition~\ref{prop:separated} and Lemma~\ref{t:dens}(i). If $K$ is a continuous image of a Valdivia compact space, then we use Lemma~\ref{l:contImageOfValdivia}.

If $w(K)$ is a strong limit cardinal, then $\dens K = w(K)$. Indeed, it is a classical result, see \cite{deGroot} or \cite[Theorem 3.3]{handbookCardinalFunctions}, that for a regular topological space $X$ we have $w(X)\leq 2^{\dens X}$; hence, if $\dens K < w(K)$ we would get $w(K) < w(K)$, a contradiction.

If $K$ is a connected continuous image of a linearly ordered compact space then, by \cite{Tr}, we have $\dens K = w(K)$.

If $K = L\times L$ for a compact space $L$, we use Proposition~\ref{prop:separated}, Lemma~\ref{t:dens}(ii) and the fact that $\psi(\Delta L,L\times L)\geq w(L\times L)$, see e.g. \cite[page 16]{handbookCardinalFunctions}.
\end{proof}

\subsection{New techniques}

Another class of compact spaces where a $2$-equilateral set of cardinality $w(K)$ in the unit ball of $\C(K)$ exists is given by the following result.

\begin{theorem}\label{t:disjKopie}Let $K$ be a compact space. Then the unit ball of $\C(K\times \{0,1\})$ contains a $2$-equilateral set of cardinality $w(K)$.
\end{theorem}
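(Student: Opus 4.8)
The plan is to use the identification $\C(K\times\{0,1\}) = \C(K)\oplus_\infty\C(K)$, writing an element as a pair $(f,g)$ with $f,g\in\C(K)$ and $\|(f,g)\| = \max\{\|f\|,\|g\|\}$. Set $\kappa:=w(K)=w(K\times\{0,1\})$. If $\kappa=\omega$ then $K\times\{0,1\}$ is metrizable and we are done by the fact recorded in Section~\ref{section:onePlus} (the preliminaries) that the unit ball of $\C(L)$ contains a $2$-equilateral set of cardinality $w(L)$ for metrizable $L$; so assume $\kappa>\omega$. I would then build, by transfinite recursion, a family $\{F_\alpha = (g_\alpha,\tilde g_\alpha)\setsep\alpha<\kappa\}\subset B_{\C(K\times\{0,1\})}$ with $\|F_\alpha-F_\beta\|=2$ whenever $\beta<\alpha$; being pairwise at distance $2$, this family is automatically $2$-equilateral of cardinality $\kappa$.

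Fix once and for all two continuous maps $\mu,\nu\colon[0,1]\to[-1,1]$ (piecewise-linear ones work) with $\mu(0)=\nu(0)=1$, $\mu(1)=\nu(1)=-1$, such that $\mu$ takes only the values $\pm1$ outside the interval $(\tfrac13,\tfrac23)$ while $\nu$ takes only the values $\pm1$ outside $(\tfrac1{12},\tfrac16)$; the key point is that these two ``transition intervals'' are disjoint. At stage $\alpha<\kappa$ the family $\{g_\beta\setsep\beta<\alpha\}\cup\{\tilde g_\beta\setsep\beta<\alpha\}\subset\C(K)$ has cardinality $<\kappa=w(K)$, hence (using $\kappa>\omega$) it does not separate the points of $K$, so one can pick distinct $x_\alpha,y_\alpha\in K$ with $g_\beta(x_\alpha)=g_\beta(y_\alpha)$ and $\tilde g_\beta(x_\alpha)=\tilde g_\beta(y_\alpha)$ for all $\beta<\alpha$. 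By Urysohn's lemma choose $\phi_\alpha\in\C(K)$ with values in $[0,1]$ such that $\phi_\alpha(x_\alpha)=0$ and $\phi_\alpha\equiv1$ on some neighbourhood of $y_\alpha$, and put $g_\alpha:=\mu\circ\phi_\alpha$, $\tilde g_\alpha:=\nu\circ\phi_\alpha$, so $F_\alpha:=(g_\alpha,\tilde g_\alpha)\in B_{\C(K\times\{0,1\})}$. Then $g_\alpha(x_\alpha)=\tilde g_\alpha(x_\alpha)=1$, both $g_\alpha$ and $\tilde g_\alpha$ are identically $-1$ near $y_\alpha$, and — crucially — the open sets $O_\alpha:=\{z\setsep|g_\alpha(z)|<1\}=\phi_\alpha^{-1}((\tfrac13,\tfrac23))$ and $\tilde O_\alpha:=\{z\setsep|\tilde g_\alpha(z)|<1\}=\phi_\alpha^{-1}((\tfrac1{12},\tfrac16))$ are disjoint.

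To verify the construction, fix $\beta<\alpha$ and put $c:=g_\beta(x_\alpha)=g_\beta(y_\alpha)$. If $c=-1$, then at the point $(x_\alpha,0)$ one has $g_\alpha(x_\alpha)-g_\beta(x_\alpha)=1-(-1)=2$; if $c=1$, then at $(y_\alpha,0)$ one has $g_\beta(y_\alpha)-g_\alpha(y_\alpha)=1-(-1)=2$; in either case $\|F_\alpha-F_\beta\|=2$. Otherwise $|c|<1$, that is $x_\alpha\in O_\beta$, and then $x_\alpha\notin\tilde O_\beta$ by disjointness, so $\tilde c:=\tilde g_\beta(x_\alpha)=\tilde g_\beta(y_\alpha)\in\{-1,1\}$; running the same computation on the second copy (using $\tilde g_\alpha(x_\alpha)=1$ and $\tilde g_\alpha\equiv-1$ near $y_\alpha$) again gives $\|F_\alpha-F_\beta\|=2$. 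This completes the recursion and the proof.

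I expect the heart of the matter to be exactly the choice of the two ``digitizations'' $\mu,\nu$ with disjoint transition intervals: this is what makes the two copies of $K$ do genuinely different work, guaranteeing that for every pair $\beta<\alpha$ at least one of the two coordinates already exhibits a gap of size $2$ at $x_\alpha$ or at $y_\alpha$, regardless of how the previously chosen functions behave there. The only other point requiring care is that ``a family of size $<w(K)$ fails to separate points'' needs $w(K)>\omega$, which is why the metrizable case is peeled off at the start; everything else is routine.
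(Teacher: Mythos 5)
Your proof is correct, and although it shares the paper's basic mechanism --- a family of fewer than $w(K)$ functions cannot separate the points of $K$, and the second copy of $K$ is used to control the region where a function on the first copy is not $\pm 1$ --- it takes a genuinely different route at the decisive step. The paper only arranges a $\tfrac{3}{2}$-separated family: its condition forces $f(z,1)=-1$ merely where $|f(z,0)|<\tfrac{1}{2}$, so values of $g(x,0)$ with modulus in $[\tfrac{1}{2},1)$ yield a gap of only $\tfrac{3}{2}$, and the argument must then invoke Lemma~\ref{l:staci1+eps} (Theorem~2.6 of Mercourakis--Vassiliadis) to upgrade to a $2$-equilateral set. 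Your two-scale digitization $\mu,\nu$ with disjoint transition intervals guarantees that each function in the family takes only the values $\pm 1$ at every point of at least one of the two copies, so the case analysis at $x_\alpha$ and $y_\alpha$ exhibits a gap of exactly $2$ in one coordinate or the other; since $\|F_\alpha-F_\beta\|\leq 2$ automatically for unit-ball elements, this gives equality and makes the construction self-contained, bypassing the external lemma entirely. The remaining differences are cosmetic: you run a transfinite recursion where the paper takes a maximal family (the extension step is the same non-separation argument, valid because $\kappa>\omega$), and you peel off the metrizable case via the general fact for metrizable compacta (which, incidentally, is recorded in the Preliminaries, not in Section~\ref{section:onePlus}) where the paper cites Lemma~\ref{t:dens}. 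All individual steps check out, including the Urysohn step (valid since $K$ is compact Hausdorff, hence normal) and the identification of $\C(K\times\{0,1\})$ with the $\ell_\infty$-sum of two copies of $\C(K)$.
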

\begin{proof}By Lemma~\ref{t:dens}, we assume that $K$ is nonmetrizable. By Lemma~\ref{l:staci1+eps}, it is sufficient to find a $\tfrac{3}{2}$-separated set of cardinality $w(K)$.

For $f\in\C(K\times 2)$, where $2:=\{0,1\}$, consider the following condition:
\begin{equation*}\tag{P2}\label{podminkaK2}
	\forall z\in K:\; |f(z,0)| < \tfrac{1}{2} \implies f(z,1)=-1.
\end{equation*}
Take a maximal $\frac{3}{2}$-separated family $\F$ (with respect to inclusion) of norm-one functions satisfying condition \eqref{podminkaK2}. We \emph{claim} that the cardinality of $\F$ equals $w(K)$.
Suppose for contradiction that the cardinality of $\F$ is less than $w(K)$. It follows that $\F$ does not separate the points of $K\times\{0\}$. Thus, for some pair of distinct points $x,y\in K$ and every $g\in\F$ we have $g(x,0)=g(y,0)$. Now, consider any norm-one function $f\in\C(K\times 2)$ satisfying condition \eqref{podminkaK2} such that $f(y,0)=-1$ and $f(x,0)=f(x,1)=1$. Such a function exists because we may pick any $\tilde{f}\in B_{\C(K)}$ with $\tilde{f}(x)=1=-\tilde{f}(y)$ and take any continuous extension of a function defined on disjoint closed sets $K\times\{0\}$, $\{(x,1)\}$ and $\tilde{f}^{-1}\left([-\tfrac{1}{2},\tfrac{1}{2}]\right)\times \{1\}$ in the obvious way, that is, $f(z,0)=\tilde{f}(z)$ for every $z\in K$, $f(x,1)=1$ and $f(z,1) = -1$ for $z\in \tilde{f}^{-1}\left([-\tfrac{1}{2},\tfrac{1}{2}]\right)$.

Fix any $g\in \F$. If $g(x,0)=g(y,0) \geq \tfrac{1}{2}$, then
$$\|f-g\|\geq |-1 - g(y,0)| = 1 + g(y,0)\geq \tfrac{3}{2}.$$
If $g(x,0)=g(y,0) \leq -\tfrac{1}{2}$, then
$$\|f-g\|\geq |1 - g(x,0)| = 1 - g(x,0)\geq \tfrac{3}{2}.$$
If $|g(x,0)| < \tfrac{1}{2}$, then since $g$ satisfies \eqref{podminkaK2} we have
$$\|f-g\|\geq |f(x,1) - g(x,1)| = 1 - g(x,1) = 2.$$
Therefore, we have $\|f-g\|\geq \tfrac{3}{2}$ for any $g\in\F$ which is a contradiction with the maximality of $\F$.
\end{proof}

\begin{corollary}\label{c:disjKopie}Let $K$ be a compact space which contains two disjoint homeomorphic compact spaces of weight $w(K)$. Then the unit ball of $\C(K)$ contains a $2$-equilateral set of cardinality $w(K)$.
\end{corollary}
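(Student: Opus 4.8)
The plan is to deduce Corollary~\ref{c:disjKopie} from Theorem~\ref{t:disjKopie} together with Lemma~\ref{l:podmnoziny}. Suppose $K$ contains two disjoint homeomorphic compact subspaces $L_0, L_1$ with $w(L_0) = w(L_1) = w(K)$. Since $L_0$ and $L_1$ are disjoint closed subsets of a compact Hausdorff space, their union $L := L_0 \cup L_1$ is a closed subspace of $K$, and because they are both closed-and-open in $L$, the space $L$ is homeomorphic to $L_0 \times \{0,1\}$. By Theorem~\ref{t:disjKopie}, the unit ball of $\C(L_0 \times \{0,1\})$ contains a $2$-equilateral set of cardinality $w(L_0) = w(K)$, hence so does the unit ball of $\C(L)$. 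Finally, $L$ is a closed subset of $K$, so by Lemma~\ref{l:podmnoziny} the unit ball of $\C(K)$ contains a $2$-equilateral set of cardinality $w(K)$.

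The only point requiring a little care is the identification of $L$ with $L_0 \times \{0,1\}$: one must check that the homeomorphism $L_0 \to L_1$, combined with the identity on $L_0$, yields a homeomorphism $L_0 \times \{0,1\} \to L$, which is immediate since both $L_i$ are simultaneously open and closed in the compact space $L$. I expect no real obstacle here; the substance of the argument is entirely in Theorem~\ref{t:disjKopie}, and the corollary is a routine packaging of it via the two lemmas.

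\begin{proof}Let $L_0, L_1 \subset K$ be two disjoint homeomorphic compact subspaces with $w(L_0) = w(L_1) = w(K)$. Since $K$ is Hausdorff and $L_0, L_1$ are disjoint and compact, the set $L := L_0 \cup L_1$ is a compact subspace of $K$ in which both $L_0$ and $L_1$ are clopen. Hence, using a homeomorphism $L_0 \to L_1$ on the second copy and the identity on the first, $L$ is homeomorphic to $L_0 \times \{0,1\}$. By Theorem~\ref{t:disjKopie}, the unit ball of $\C(L_0\times\{0,1\})$ contains a $2$-equilateral set of cardinality $w(L_0) = w(K)$, and therefore so does the unit ball of $\C(L)$. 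Since $L$ is a closed subset of $K$, Lemma~\ref{l:podmnoziny} yields that the unit ball of $\C(K)$ contains a $2$-equilateral set of cardinality $w(K)$.
\end{proof}
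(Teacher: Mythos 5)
Your proof is correct and follows exactly the route the paper intends: the paper's own proof is the one-line remark that the corollary ``follows immediately from Theorem~\ref{t:disjKopie} and Lemma~\ref{l:podmnoziny}'', and you have simply (and correctly) filled in the routine identification of $L_0\cup L_1$ with $L_0\times\{0,1\}$.
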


\begin{proof}The statement follows immediately from Theorem~\ref{t:disjKopie} and Lemma~\ref{l:podmnoziny}.
\end{proof}

Even though there are known examples of spaces that do not satisfy the hypothesis of Corollary~\ref{c:disjKopie} (e.g. the space $[0,\kappa]$ for any cardinal $\kappa$), Corollary~\ref{c:disjKopie} seems to give a new and strong sufficient condition for the existence of a big equilateral set. This is witnessed by the following interesting consequences. 
\begin{lemma}\label{l:konvexKpkt}
Let $K$ be a compact convex subset of a locally convex space $E$. Then $K$ contains two disjoint subsets homeomorphic to itself.
\end{lemma}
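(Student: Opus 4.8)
The plan is to split $K$ into two pieces by a continuous convex function and then spread each piece back out to something homeomorphic to $K$ using a segment-retraction. Concretely, since $K$ is a compact convex subset of a locally convex space, it has more than one point (the case $|K|=1$ being trivial or excluded since $K$ is infinite), so we may fix two distinct points $a,b\in K$ and, by Hahn--Banach, a continuous linear functional $\varphi$ on $E$ with $\varphi(a)<\varphi(b)$. Rescaling and translating, we may assume $\varphi(K)\subset[0,1]$ with $0,1\in\varphi(K)$ (replace $\varphi$ by $t\mapsto (\varphi-\min_K\varphi)/(\max_K\varphi-\min_K\varphi)$, both extrema being attained by compactness). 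Pick points $p,q\in K$ with $\varphi(p)=0$ and $\varphi(q)=1$. Set
\[
K_0=\{x\in K\setsep \varphi(x)\le \tfrac12\},\qquad K_1=\{x\in K\setsep \varphi(x)\ge \tfrac12\},
\]
which are disjoint except along the slab $\varphi=\tfrac12$; to make them genuinely disjoint I will instead use $K_0'=\{\varphi(x)\le\tfrac13\}$ and $K_1'=\{\varphi(x)\ge\tfrac23\}$, which are disjoint closed convex subsets of $K$.

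The key step is to exhibit a homeomorphism of $K$ onto $K_0'$ (and, symmetrically, onto $K_1'$). For $x\in K$ define
\[
h_0(x)=\Big(1-\tfrac{\varphi(x)}{3}\Big)\,x \;+\; \tfrac{\varphi(x)}{3}\,p,
\]
a point on the segment $[x,p]$, hence in $K$ by convexity. Since $\varphi(p)=0$ we get $\varphi(h_0(x))=\big(1-\tfrac{\varphi(x)}{3}\big)\varphi(x)\le \varphi(x)\le 1$; I will check the elementary estimate that this value always lies in $[0,\tfrac13]$ is \emph{not} automatic, so instead I take the cleaner map $h_0(x)=\tfrac13 x+\tfrac23 p$, the homothety toward $p$ with ratio $\tfrac13$. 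This is continuous and affine, maps $K$ into $K$ by convexity, is injective (an affine map with trivial linear part only on a point), and its inverse $y\mapsto 3y-2p$ restricted to $h_0(K)$ is continuous; since $K$ is compact Hausdorff, $h_0$ is a homeomorphism onto $h_0(K)$. Moreover $\varphi(h_0(x))=\tfrac13\varphi(x)+\tfrac23\varphi(p)=\tfrac13\varphi(x)\in[0,\tfrac13]$, so $h_0(K)\subset K_0'$. Symmetrically, the homothety $h_1(x)=\tfrac13 x+\tfrac23 q$ toward $q$ is a homeomorphism of $K$ onto a subset of $K_1'=\{\varphi\ge\tfrac23\}$, since $\varphi(h_1(x))=\tfrac13\varphi(x)+\tfrac23\in[\tfrac23,1]$.

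Finally $h_0(K)$ and $h_1(K)$ are disjoint: the first lies in $\{\varphi\le\tfrac13\}$, the second in $\{\varphi\ge\tfrac23\}$, and these do not meet. Thus $h_0(K)$ and $h_1(K)$ are two disjoint subsets of $K$, each homeomorphic to $K$, which is the claim. The only point requiring a little care — and the one I would flag as the main (mild) obstacle — is the existence of the separating functional $\varphi$ with distinct values on two points of $K$: this uses that $E$ is locally convex so that Hahn--Banach applies and $E^*$ separates points, and it uses that $K$, being infinite, is not a singleton; everything after that is the elementary fact that a homothety with ratio in $(0,1)$ toward a point of a compact convex set is a self-homeomorphism onto its image.
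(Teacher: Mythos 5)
Your proof is correct and follows essentially the same route as the paper's: shrink $K$ by a homothety toward each of two points and use a separating continuous linear functional to verify that the two homothetic copies are disjoint. The only cosmetic difference is that you center the homotheties at points where the functional attains its extremes (allowing a fixed ratio $\tfrac13$), whereas the paper centers them at arbitrary distinct points and instead takes the ratio $\lambda$ small enough.
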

\begin{proof}
Let $ x, y \in K$ be two distinct points and let $ x^{*} \in E^{*} $ be such that $ x^{*}(y - x) > 0 $. It is sufficient to show that
$$ (1-\lambda) x + \lambda K \quad \textrm{and} \quad (1-\lambda) y + \lambda K $$
are disjoint for a small enough $ \lambda \in (0, 1] $. Assuming the opposite for some $ \lambda $, we obtain that there are $ u, v \in K $ such that
$$ (1-\lambda) x + \lambda u = (1-\lambda) y + \lambda v, $$ 
which implies
$$ \sup x^{*}(K) - \inf x^{*}(K) \geq x^{*}(u) - x^{*}(v) = \frac{1-\lambda}{\lambda} x^{*}(y - x). $$
Therefore, any $ \lambda \in (0, 1] $ satisfying $ \frac{1-\lambda}{\lambda} x^{*}(y - x) > \sup x^{*}(K) - \inf x^{*}(K) $ works.
\end{proof}

\begin{corollary}Let $K$ be a compact space which is homeomorphic 
to a compact convex set in a locally convex space. Then the unit ball of $\C(K)$ contains a $2$-equilateral set of cardinality $w(K)$.
\end{corollary}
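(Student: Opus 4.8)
The plan is to chain together Lemma~\ref{l:konvexKpkt} and Corollary~\ref{c:disjKopie}, with only a small amount of routine verification in between. First I would fix a homeomorphism $h\colon K\to K'$ of $K$ onto a compact convex subset $K'$ of a locally convex space $E$. Applying Lemma~\ref{l:konvexKpkt} to $K'$, I obtain two \emph{disjoint} subsets $A',B'\subset K'$, each homeomorphic to $K'$. Transporting these back through $h$, the sets $A:=h^{-1}(A')$ and $B:=h^{-1}(B')$ are disjoint subsets of $K$, and each is homeomorphic to $K$ (since $A\cong A'\cong K'\cong K$, and similarly for $B$).

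The second step is to check that $A$ and $B$ are compact and of weight $w(K)$. Inspecting the proof of Lemma~\ref{l:konvexKpkt}, the subsets produced there are of the form $(1-\lambda)x+\lambda K'$ and $(1-\lambda)y+\lambda K'$ for a fixed $\lambda\in(0,1]$; these are images of the compact set $K'$ under affine homeomorphisms of $E$, hence compact. Consequently $A$ and $B$, being preimages of compact sets under the homeomorphism $h$, are compact as well. Since weight is a topological invariant, $w(A)=w(A')=w(K')=w(K)$, and likewise $w(B)=w(K)$.

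Finally, $A$ and $B$ are two disjoint compact subspaces of $K$, homeomorphic to each other (both being homeomorphic to $K$), of weight $w(K)$. Thus $K$ satisfies the hypothesis of Corollary~\ref{c:disjKopie}, which yields a $2$-equilateral set of cardinality $w(K)$ in the unit ball of $\C(K)$, as desired.

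I do not expect a genuine obstacle: once Lemma~\ref{l:konvexKpkt} and Corollary~\ref{c:disjKopie} are available, the argument is essentially bookkeeping. The only point deserving care is the observation that the two disjoint copies furnished by Lemma~\ref{l:konvexKpkt} are honestly compact and carry the full weight $w(K)$, which is immediate because they arise from $K'$ by affine homeomorphisms.
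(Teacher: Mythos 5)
Your argument is correct and is exactly the paper's proof, which simply cites Lemma~\ref{l:konvexKpkt} together with Corollary~\ref{c:disjKopie}; your extra verification that the two copies $(1-\lambda)x+\lambda K'$ and $(1-\lambda)y+\lambda K'$ are compact and of full weight is the right (and routine) bookkeeping the paper leaves implicit.
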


\begin{proof}
We use Corollary~\ref{c:disjKopie} and Lemma~\ref{l:konvexKpkt}.
\end{proof}

Before proving another corollary of Theorem~\ref{t:disjKopie}, let us formulate the following easy observation.

\begin{lemma}\label{l:bodVelkeVahy}Let $K$ be a compact space. Then there exists a point $x\in K$ such that $w(U) = w(K)$ for every neighborhood $U$ of $x$.
\end{lemma}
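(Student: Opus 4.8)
The plan is to exploit compactness together with a transfinite/cardinality argument: the weight of $K$ is the supremum of the weights of small neighbourhoods, but if \emph{every} point had a neighbourhood of strictly smaller weight we could cover $K$ by finitely many of them and bound $w(K)$ below itself. More precisely, suppose for contradiction that for every $x\in K$ there is an open neighbourhood $U_x$ with $w(U_x)<w(K)$; equivalently, we may take $U_x$ to be a cozero (or just open $F_\sigma$, using normality) neighbourhood whose closure $\overline{U_x}$ still has weight $<w(K)$, since a point has a neighbourhood base of such sets in a compact Hausdorff space and weight is monotone under passing to subspaces. By compactness finitely many $U_{x_1},\dots,U_{x_n}$ cover $K$, so $K=\overline{U_{x_1}}\cup\cdots\cup\overline{U_{x_n}}$.

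The next step is to observe that weight is finitely (indeed countably) additive over closed covers of a compact space: if $K=L_1\cup\cdots\cup L_n$ with each $L_i$ closed, then $w(K)=\max_i w(L_i)$. One direction is the monotonicity just used; for the other, take a base $\B_i$ of $L_i$ of size $w(L_i)$ consisting of relatively open sets, write each member as $V\cap L_i$ with $V$ open in $K$, and check that the collection of all such $V$'s together with their finite intersections forms a base for $K$ — this uses that every open $W\subseteq K$ and every $x\in W$ lie in some basic neighbourhood, handled separately according to which $L_i$'s contain $x$. Applying this to our finite closed cover gives $w(K)=\max_i w(\overline{U_{x_i}})<w(K)$, the desired contradiction. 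Hence some point $x$ has the property that $w(U)=w(K)$ for every neighbourhood $U$ of $x$ (monotonicity gives $w(U)\le w(K)$ automatically, so only the lower bound needs the argument).

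An alternative, perhaps slicker, route avoids the additivity lemma: consider the family $\F$ of all open sets $U\subseteq K$ with $w(\overline U)<w(K)$, and let $G=\bigcup\F$. If $G\ne K$ we are done by picking $x\notin G$ (every neighbourhood of such an $x$ fails to have small weight). If $G=K$, compactness yields a finite subcover, and one still needs that a finite union of closed sets of small weight has small weight, so this does not really dodge the main point. Either way, the single genuine ingredient is the behaviour of weight under finite closed unions in compact spaces.

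The main obstacle is precisely verifying this additivity of weight over finite closed covers; everything else (compactness, monotonicity of $w$, existence of a neighbourhood base of closed sets in compact Hausdorff spaces) is entirely routine. This fact is standard — it follows, for instance, from the corresponding statement for networks together with the coincidence of weight and network weight for compacta — so in the write-up I would either cite it from \cite{handbookCardinalFunctions} or \cite{eng} or give the two-line base-patching argument sketched above. No delicate set-theoretic hypotheses on $w(K)$ (successor versus limit, regular versus singular) are needed, since the cover is finite.
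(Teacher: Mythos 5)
Your proof is correct and is essentially the paper's argument: assume every point has a neighbourhood of smaller weight, extract a finite subcover by compactness, and use finite subadditivity of weight to get $w(K)<w(K)$. The only difference is that the paper applies subadditivity directly to the open sets $U_{x_i}$ (where it is immediate, since a union of relative bases of open subspaces is a base for the union), so your detour through closures and the additivity of weight over finite \emph{closed} covers is unnecessary, though harmless.
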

\begin{proof}Suppose for contradiction that for every point $x$ there exists an open neighborhood $U_x$ of $x$ with $w(U_x)<w(K)$. By compactness, there are points $x_1,\ldots,x_n\in K$ such that $U_{x_1}\cup\ldots\cup U_{x_n} = K$; hence, $w(K) = w(U_{x_1}\cup\ldots\cup U_{x_n})\leq w(U_{x_1}) + \ldots + w(U_{x_n}) < w(K)$, a contradiction.
\end{proof}

\begin{corollary}Let $K$ be a homogeneous compact space. Then the unit ball of $\C(K)$ contains a $2$-equilateral set of cardinality $w(K)$.
\end{corollary}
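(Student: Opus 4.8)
The plan is to reduce everything to Corollary~\ref{c:disjKopie}: it suffices to produce two disjoint compact subspaces of $K$, homeomorphic to one another, each of weight $w(K)$. Homogeneity is the tool that turns a single ``heavy'' neighbourhood into two disjoint copies of itself.

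First I would invoke Lemma~\ref{l:bodVelkeVahy} to fix a point $x\in K$ such that $w(U)=w(K)$ for every neighbourhood $U$ of $x$. Since $K$ is infinite, pick a point $y\neq x$, and by homogeneity fix a homeomorphism $h\colon K\to K$ with $h(x)=y$. Using that $K$ is compact Hausdorff (hence regular), choose disjoint open sets $U_1\ni x$ and $U_2\ni y$; then $W:=U_1\cap h^{-1}(U_2)$ is an open neighbourhood of $x$, and regularity provides a closed neighbourhood $V$ of $x$ with $V\subseteq W$.

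Now $V$ is a closed, hence compact, subset of $K$, and $h(V)$ is a homeomorphic copy of it, also compact. They are disjoint, since $V\subseteq U_1$, $h(V)\subseteq U_2$, and $U_1\cap U_2=\emptyset$. Moreover $V$ contains an open neighbourhood of $x$, which has weight $w(K)$ by the choice of $x$; as an open subspace of $V$ has weight at most $w(V)$, this forces $w(V)=w(K)$, and likewise $w(h(V))=w(K)$. Applying Corollary~\ref{c:disjKopie} to $K$ then yields a $2$-equilateral set of cardinality $w(K)$ in the unit ball of $\C(K)$.

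I do not expect a real obstacle here: the only delicate point is making the two copies disjoint without shrinking their weight, which is precisely why we pass through the distinguished point $x$ of Lemma~\ref{l:bodVelkeVahy} and separate $x$ from $h(x)$ before taking a small closed neighbourhood, rather than, say, trying to split $K$ into two pieces directly.
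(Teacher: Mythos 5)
Your argument is correct and follows essentially the same route as the paper: both fix a point of full local weight via Lemma~\ref{l:bodVelkeVahy}, move it by a homeomorphism, separate the point from its image, and take disjoint closed neighbourhoods to feed into Corollary~\ref{c:disjKopie}. The only cosmetic difference is that the paper takes closures of suitably intersected open sets, whereas you take a closed neighbourhood inside $U_1\cap h^{-1}(U_2)$; the substance is identical.
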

\begin{proof}
By Lemma~\ref{l:bodVelkeVahy}, there exists a point $x\in K$ such that $w(U)=w(K)$ for every neighborhood of $x$. Pick $y\in K\setminus\{x\}$ and a homeomorphism $h:K\to K$ with $h(x) = y$.

Find open neighborhoods $U$ and $V$ of $x$ and $y$ respectively such that $\overline{U}\cap\overline{V}=\emptyset$ and $h(U)=V$. This is indeed possible since we may pick arbitrary neighborhoods $U_0$ and $V_0$ of $x$ and $y$ respectively such that $\overline{U_0}\cap\overline{V_0}=\emptyset$ and put $U:=h^{-1}(V_0)\cap U_0$, $V:=h(U)$.

Now, $\overline{U}$ and $\overline{V}$ are homeomorphic, disjoint and $w(\overline{U}) = w(K)$; hence, we may apply Corollary~\ref{c:disjKopie}.
\end{proof}

The next corollary of Theorem~\ref{t:disjKopie} is based on a variant of the Ramsey theorem for higher cardinalities.

\begin{definition}Let $\kappa$ and $\lambda$ be cardinals. By writing
\[\kappa \to (\lambda)_2^2\]
we mean that the following statement is true: for every set $X$ of cardinality $\kappa$ and for every $F:[X]^2\to \{0,1\}$ there exists a subset $Y$ of $X$ of cardinality $\lambda$ such that $F|_{[Y]^2}$ is constant.
\end{definition}

\begin{corollary}\label{c:sipky}Let $K$ be a compact space and let $\kappa$ be the weight of $K$. If $\lambda$ is a cardinal with $\kappa \to (\lambda)_2^2$, then the unit ball of $\C(K)$ contains a $2$-equilateral set of cardinality $\lambda$.
\end{corollary}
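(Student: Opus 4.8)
The plan is to reduce Corollary~\ref{c:sipky} to Corollary~\ref{c:disjKopie} (equivalently, Theorem~\ref{t:disjKopie}) by finding inside $K$ two disjoint homeomorphic closed subspaces of weight $\lambda$. The starting point is Lemma~\ref{l:bodVelkeVahy}, which produces a point $x\in K$ every neighbourhood of which has weight $w(K)=\kappa$. The idea is then to build, by transfinite recursion of length $\lambda$, a family of $\lambda$ many pairwise disjoint nonempty open sets that "converge" to $x$ in a suitable sense, so that a partition argument on pairs of these open sets (or on associated points) can be made.

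More concretely, here is how I would carry it out. Choose a base $\B$ of open neighbourhoods of $x$. Recursively, for $\alpha<\lambda$, having chosen pairwise disjoint nonempty open sets $W_\beta$ for $\beta<\alpha$ together with an open neighbourhood $V_\alpha$ of $x$ disjoint from all $\overline{W_\beta}$, use the fact that $w(V_\alpha)=\kappa\geq\lambda>1$ to find two distinct points $p_\alpha,q_\alpha\in V_\alpha$, then separate $x$ from, say, $p_\alpha$ by a smaller neighbourhood $V_{\alpha+1}\ni x$ and set $W_\alpha$ to be an open set around $p_\alpha$ with $\overline{W_\alpha}\subset V_\alpha\setminus\overline{V_{\alpha+1}}$ (at limits take $V_\alpha=\bigcap_{\beta<\alpha}V_\beta$ — but this intersection need not be open, so one must be a bit careful and instead maintain an open neighbourhood of $x$ avoiding all previously chosen closed sets, which is possible at limit stages of cofinality $<\lambda$ only with extra care). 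The cleaner route: since $\psi(\{x\},K)$ could be small, work instead with the weight. I would pick, for each $\alpha<\lambda$, a point $x_\alpha$ and a relatively compact open set $W_\alpha\ni x_\alpha$ with $w(\overline{W_\alpha})=\kappa$ and the $\overline{W_\alpha}$ pairwise disjoint; disjointness of $\lambda$ many such sets is obtained because $x$ witnesses that "most" of the weight is concentrated arbitrarily close to $x$, so one can keep shrinking. Then each $\overline{W_\alpha}$ is a compact space of weight $\kappa$, hence contains (by Lemma~\ref{l:nuldimenzionalni}-type reasoning, or just directly) a $2$-equilateral set of size $\kappa\geq\lambda$ — but that already gives the conclusion without any Ramsey theorem, so this is not the intended proof.

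The intended proof surely uses $\kappa\to(\lambda)_2^2$ genuinely: take a base $\{B_\xi:\xi<\kappa\}$ of $K$, and for each pair $\{x_\alpha,x_\beta\}$ of $\lambda$ many chosen points define $F(\{x_\alpha,x_\beta\})$ to record some binary information (e.g., whether $x_\alpha$ and $x_\beta$ can be separated by basic open sets in a coherent fashion, or the parity/order of the least index of a separating basic set); applying $\kappa\to(\lambda)_2^2$ to the $\kappa$ many points of a suitable dense-or-network set, one extracts a homogeneous set $Y$ of size $\lambda$ on which the chosen points split into two halves that can be simultaneously separated, yielding two disjoint closed subspaces, each of weight $\lambda$, that are homeomorphic (e.g., both homeomorphic to the closure of the selected point set, or one maps onto the other). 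Then Corollary~\ref{c:disjKopie} finishes the proof. The main obstacle I anticipate is precisely the bookkeeping that turns a homogeneous set for the colouring into \emph{two disjoint homeomorphic} pieces of full weight $\lambda$: one needs the colouring to be defined so that homogeneity forces both a separation (disjointness) and an isomorphism of the two halves, and one must verify that each half still has weight $\lambda$ (not smaller), which is where the choice of the initial $\lambda$-sized point set — taken to be left-separated or to have density $\lambda$, via Lemma~\ref{t:dens} applied inside a neighbourhood of the point from Lemma~\ref{l:bodVelkeVahy} — becomes essential.
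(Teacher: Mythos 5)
Your proposal does not reach a proof, and the route you are guessing at is not the one the paper takes. The paper's argument colours pairs of \emph{functions}, not pairs of points of $K$: by Theorem~\ref{t:disjKopie} the unit ball of $\C(K\times\{0,1\})$ already contains a $2$-equilateral set $X$ of cardinality $\kappa$; one sets $F(\{f,g\})=0$ iff $|f-g|$ attains the value $2$ at some point of $K\times\{0\}$, applies $\kappa\to(\lambda)_2^2$ to get a homogeneous $Y\in[X]^\lambda$, and observes that the restrictions of the members of $Y$ to $K\times\{0\}$ (if the colour is $0$) or to $K\times\{1\}$ (if the colour is $1$, using compactness to see that the supremum $2$ must then be attained on the other copy) form a $2$-equilateral set of cardinality $\lambda$ in the unit ball of a copy of $\C(K)$. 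This is a two-line reduction once Theorem~\ref{t:disjKopie} is in hand; no new topological structure inside $K$ is needed.

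Concretely, your sketches have the following problems. The first construction --- $\lambda$ many pairwise disjoint open sets $W_\alpha$ with $w(\overline{W_\alpha})=\kappa$ --- is impossible in general: a separable compact space of weight $\mathfrak{c}$ (so $\kappa=\mathfrak{c}$, and $\lambda$ may be uncountable) is ccc and therefore does not even admit uncountably many pairwise disjoint nonempty open sets. You notice yourself that this branch, were it to work, would bypass the Ramsey hypothesis entirely, which is a strong sign it cannot work. The second branch, which you label the ``intended proof,'' never specifies the colouring, never proves that homogeneity yields two \emph{disjoint} and \emph{homeomorphic} compacta, and never verifies that each piece has weight $\lambda$; you explicitly list these as unresolved obstacles. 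As it stands the proposal is a plan with a known counterexample to one half and an unexecuted speculation for the other, so it must be counted as containing a genuine gap.
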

\begin{proof}
By Theorem~\ref{t:disjKopie}, in the unit ball of $\C(K\times\{0,1\})$ there exists a $2$-equilateral set $X$ of cardinality $\kappa$. Consider the mapping $F:[X]^2\to \{0,1\}$ such that $F(\{f,g\}) = 0$ if and only if there exists a point $x\in K\times\{0\}$ with $|(f-g)(x)| = 2$. If there is a set $Y\subset X$ of cardinality $\lambda$ such that $F|_{[Y]^2}\equiv 0$ then $\{f|_{K\times\{0\}}\setsep f\in Y\}$ is a $2$-equilateral set in the unit ball of $\C(K\times\{0\})$. Otherwise, there is a set $Y\subset X$ of cardinality $\lambda$ such that $F|_{[Y]^2}\equiv 1$ and $\{f|_{K\times\{1\}}\setsep f\in Y\}$ is a $2$-equilateral set.
\end{proof}

As a corollary, we may obtain the following result.

\begin{corollary}\label{c:oJednoNize}
Let $K$ be a compact space with $w(K) \geq (2^{<\kappa})^+$ for some cardinal $\kappa$. Then there exists a $2$-equilateral set of cardinality $\kappa$ in the unit ball of $\C(K)$.
\end{corollary}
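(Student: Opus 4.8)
The plan is to reduce the statement to a purely combinatorial partition relation and then quote the Erd\H{o}s--Rado theorem.

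First, by Corollary~\ref{c:sipky} it is enough to verify that $w(K)\to(\kappa)^2_2$. The arrow relation is monotone in its first argument: if $\mu'\geq\mu$ and $\mu\to(\kappa)^2_2$, then $\mu'\to(\kappa)^2_2$, since any $2$-colouring of $[\mu']^2$ restricts to a $2$-colouring of $[\mu]^2$ and a homogeneous set found inside $\mu$ is also a homogeneous set inside $\mu'$. Since $w(K)\geq(2^{<\kappa})^+$, it therefore suffices to establish
\[(2^{<\kappa})^+\to(\kappa)^2_2.\]
(The case of finite $\kappa$ is trivial, so one may assume $\kappa$ is infinite.)

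Second, I would invoke the Erd\H{o}s--Rado theorem, which is precisely the relation $(2^{<\kappa})^+\to(\kappa)^2_2$ and holds for every infinite cardinal $\kappa$; see e.g.\ \cite{kunen}. For orientation: when $\kappa=\mu^+$ is a successor cardinal one has $2^{<\kappa}=2^\mu$, and the relation is the familiar textbook form $(2^\mu)^+\to(\mu^+)^2_2$, whose standard proof builds inside a set of size $(2^\mu)^+$ an \emph{end-homogeneous} subset of size $\mu^+$ (every initial point of a pair determines the colour) and then extracts a genuinely homogeneous subset of size $\mu^+$ by pigeonhole on the two colours. Running the same ramification argument with elementary submodels of size $2^{<\kappa}$ gives the relation for an arbitrary, possibly singular, $\kappa$. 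Together with the monotonicity step, this yields $w(K)\to(\kappa)^2_2$, and Corollary~\ref{c:sipky} then produces a $2$-equilateral set of cardinality $\kappa$ in the unit ball of $\C(K)$.

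The only point requiring care is bookkeeping with the set-theoretic literature: one should quote the sharp form $(2^{<\kappa})^+\to(\kappa)^2_2$ valid for all infinite $\kappa$, rather than merely the commonly stated successor instance $(2^\mu)^+\to(\mu^+)^2_2$ (which already suffices when $\kappa$ happens to be a successor cardinal). Beyond Corollary~\ref{c:sipky}, no input about Banach spaces is needed.
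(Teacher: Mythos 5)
Your proof is correct and follows exactly the paper's route: reduce via Corollary~\ref{c:sipky} to the partition relation $w(K)\to(\kappa)^2_2$ and invoke the Erd\H{o}s--Rado theorem in the sharp form $(2^{<\kappa})^+\to(\kappa)^2_2$ for infinite $\kappa$ (the paper cites \cite[Theorem 2.9]{handbookSipky} for this). Your explicit remark on the monotonicity of the arrow relation in its first argument is a detail the paper leaves implicit, but it is the same argument.
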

\begin{proof}
It follows from Corollary~\ref{c:sipky} and the classical Erd\H os-Rado theorem (see e.g. \cite[Theorem 2.9]{handbookSipky}), which states that $(2^{<\kappa})^+ \to (\kappa)_2^2$ whenever $\kappa$ is an infinite cardinal.
\end{proof}

It deserves to be mentioned here that by the result of Terenzi \cite{T89}, if $E$ is any Banach space with $\dens E\geq (2^\mathfrak{c})^+$, then $E$ contains an equilateral set $S$ with $|S|\geq \mathfrak{c}^+$.

\begin{theorem}\label{t:LinearneUsporadane}Let $K$ be a linearly ordered compact space. Then the unit ball of $\C(K)$ contains a $2$-equilateral set of cardinality $w(K)$.
\end{theorem}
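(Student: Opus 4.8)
The plan is to run a dichotomy on the set $J$ of \emph{jumps} of $K$, i.e.\ the pairs $a<b$ of consecutive points (no point lying strictly between $a$ and $b$). For each $(a,b)\in J$ the initial segment $C_{(a,b)}:=\{x\in K\setsep x\le a\}=\{x\in K\setsep x<b\}$ is clopen; the family $\{C_{(a,b)}\setsep (a,b)\in J\}$ consists of initial segments of a linear order, so it is linearly ordered by inclusion, and $(a,b)\mapsto C_{(a,b)}$ is injective (distinct jumps have distinct left endpoints, and $a<a'$ forces $C_{(a,b)}\subsetneq C_{(a',b')}$). Thus $J$ yields a chain of $|J|$ distinct clopen subsets of $K$. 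I would also isolate the following elementary remark: whenever a compact space $L$ admits a family $\{D_\alpha\setsep\alpha<\lambda\}$ of $\lambda$ distinct clopen subsets linearly ordered by inclusion, the unit ball of $\C(L)$ contains a $2$-equilateral set of cardinality $\lambda$; indeed, putting $f_\alpha:=2\chi_{D_\alpha}-1$ one gets norm-one functions, and if $D_\alpha\subsetneq D_\beta$ then any $x\in D_\beta\setminus D_\alpha$ gives $(f_\beta-f_\alpha)(x)=2$, so $\|f_\alpha-f_\beta\|=2$.

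Write $\kappa:=w(K)$. If $|J|\ge\kappa$, then any $\kappa$ of the sets $C_{(a,b)}$ form a chain of $\kappa$ distinct clopen subsets of $K$, and the remark (with $L=K$) finishes the proof.

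It remains to treat the case $|J|<\kappa$. Here I claim $\dens K=\kappa$; granting this, Lemma~\ref{t:dens}(i) (applied with $A=K$) produces a left-separated family in $K$ of cardinality $\kappa$, and Proposition~\ref{prop:separated} converts it into a $2$-equilateral set of cardinality $\kappa$ in $B_{\C(K)}$. To prove $\dens K=\kappa$ I would invoke the standard fact that a compact linearly ordered space has a base of cardinality $\dens K+|J|+\omega$: fixing a dense set $D$ with $|D|=\dens K$, the sets of the forms $(a,b)$, $[\min K,b)$, $(a,\max K]$, $[\min K,\max K]$ with $a,b$ ranging over $D\cup\{a',b'\setsep (a',b')\in J\}\cup\{\min K,\max K\}$ form such a base. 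Since $|J|<\kappa$ and $\kappa\ge\omega$, this gives $\kappa=w(K)\le\dens K+|J|+\omega=\dens K$, hence equality.

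The clopen-chain trick and the reduction through Lemma~\ref{t:dens}(i) and Proposition~\ref{prop:separated} are immediate; the only step demanding genuine care is the base estimate in the case $|J|<\kappa$, and within it the single delicate point: to shrink a given basic interval $(p,q)\ni x$ to a basic interval with endpoints in the prescribed set, density of $D$ supplies the endpoints \emph{except} when $x$ has an immediate predecessor or successor, in which case that neighbour — an endpoint of a jump — must be taken, which is exactly why all jump endpoints are thrown into the generating set. Finally, let me remark that the statement is genuinely not covered by Corollary~\ref{c:disjKopie}: for instance $[0,\omega_1]$ has weight $\omega_1$ but contains no two disjoint compact subspaces of weight $\omega_1$, since any such subspace is necessarily cofinal and hence contains the point $\omega_1$.
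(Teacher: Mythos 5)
Your argument is correct and is essentially the paper's own proof in mirror image: the paper likewise dichotomizes between the case $\dens K=w(K)$ (handled via Lemma~\ref{t:dens}(i) and Proposition~\ref{prop:separated}) and the case where the jump endpoints have cardinality $w(K)$ (handled via the clopen initial segments $\{y<x\}$ and the step functions $2\chi-1$, i.e.\ your clopen-chain remark), the common key point being that intervals with endpoints in a dense set, together with neighbourhood bases built from jump endpoints, form a base, so that $w(K)\le\max(\dens K,|J|,\omega)$. One cosmetic slip: the equality $\dens K+|J|+\omega=\dens K$ need not hold (the double arrow space has $\mathfrak{c}$ jumps but is separable); what your hypotheses actually give is that $\dens K<\kappa$ together with $|J|<\kappa$ would force $w(K)\le\max(\dens K,|J|,\omega)<\kappa$, a contradiction, which yields $\dens K=\kappa$ as desired.
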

\begin{proof}Put $\kappa:=w(K)$. By Lemma~\ref{t:dens} we may suppose that $\lambda:=\dens(K)<\kappa$. Let $D\subset K$ be a dense set of cardinality $\lambda$. The cardinality of the system of open intervals $\{(a, b)\setsep a, b\in D, a < b\}$ is $\lambda$, hence it is not a base for $K$. Put
\[L:=\{x\in K\setsep \exists a<x: (a,x)=\emptyset\},\quad R:=\{x\in K\setsep \exists b>x: (x,b)=\emptyset\}.\]

We notice that if $x\in L$, then the set $V_x=\{y\in K\setsep y<x\}$ is a clopen subset of $K$, hence $|L|\leq w(K)$. Analogously, we get that $|R|\leq w(K)$. We \emph{claim} that either $L$ or $R$ is of cardinality $w(K)$. Indeed, assume the opposite case. For every $x\in L\setminus R$, there is $a_x < x$ with $(a_x,x)=\emptyset$ and $\mathcal{B}_x:=\{(a_x,b)\setsep b\in D, b > x\}$ is a neighborhood basis of $x$. Similarly, for every $x\in R\setminus L$ we find $b_x > x$ such that $\mathcal{B}_x:=\{(a,b_x)\setsep a\in D, a  < x\}$ is a neighborhood basis of $x$. Note that the points of $L\cap R$ are isolated. Then
\[
\mathcal{B} = \big\{(a, b)\setsep a, b\in D, a < b\big\}\cup \bigcup\big\{\mathcal{B}_x\setsep x\in L\triangle R\big\}\cup \big\{\{x\}\setsep x\in L\cap R\big\}\]
is of cardinality less then $\kappa$. Moreover, it is easy to see that $\mathcal{B}$ is a basis of $K$; hence, $w(K)<\kappa$, a contradiction.

Now, assume that the cardinality of $L$ is $\kappa$. For every $x\in L$ consider a continuous function $f_x$ defined as
\[f_x(y)=\begin{cases}1, & y\geq x,\\
-1, & y <x.
\end{cases}\]
Then $\{f_x\setsep x\in L\}$ is a $2$-equilateral set of cardinality $w(K)$. The case when $R$ is of cardinality $\kappa$ is similar.
\end{proof}

It is worth mentioning that the density or even the hereditary density and the hereditary Lindel\"of degree of a linearly ordered compact space can be less than the weight. This is witnessed e.g. by the Alexandrov double arrow space.

\section{Remarks and questions}

Up to our knowledge it is not known whether in the unit ball of a nonseparable Banach space $X$ there exists a $1$-separated set of cardinality equal to the density of $X$. However, if we consider only Banach spaces of the form $\C(K)$, this is easy. The proof of the following proposition is a straightforward modification of the proof of \cite[Theorem 4.11]{KaniaKochanek}. A special case of it is mentioned without a proof in \cite[Question 2.7 (1), (b)]{MV15}.

\begin{proposition}Let $K$ be a compact space. Then the unit ball of $\C(K)$ contains a $1$-separated set of cardinality $w(K)$.
\end{proposition}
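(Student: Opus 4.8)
The plan is to reuse the maximal-family argument from the proof of Theorem~\ref{t:jednaPlus}, but with the separation constant $1$ in place of ``strictly bigger than $1$''. Relaxing $>1$ to $\geq 1$ is exactly what makes the crude two-point estimate survive, so the local-maximum condition (P1) is no longer needed and one can work with a completely unconstrained maximal $1$-separated family.

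First I would dispose of the metrizable case: if $K$ is metrizable then $w(K)=\omega$ and $B_{\C(K)}$ contains even a $2$-equilateral sequence (for instance, combine Lemma~\ref{t:dens}(i) with Proposition~\ref{prop:separated}), which is in particular $1$-separated. So I may assume $K$ is nonmetrizable and set $\kappa:=w(K)>\omega$.

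Then, by Zorn's lemma, I would fix a maximal (with respect to inclusion) $1$-separated family $\F$ of norm-one functions in $\C(K)$; this exists because the union of a chain of $1$-separated families is again $1$-separated. The claim is that $|\F|\geq\kappa$. Suppose not. Since $\kappa>\omega$, the identity $w(K)=\omega+\min\{|\mathcal G|\setsep \mathcal G\subset\C(K)\text{ separates the points of }K\}$ forces $\F$ not to separate the points of $K$, so there are distinct $x,y\in K$ with $g(x)=g(y)$ for every $g\in\F$. Using Urysohn's lemma (valid since a compact Hausdorff space is normal) I would pick $f\in B_{\C(K)}$ with $f(x)=1$ and $f(y)=-1$, so that $\|f\|=1$. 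For an arbitrary $g\in\F$, writing $t:=g(x)=g(y)\in[-1,1]$, one has
\[
\|f-g\|\geq\max\bigl\{|1-t|,\;|-1-t|\bigr\}=1+|t|\geq 1 .
\]
Moreover $f\notin\F$, because every member of $\F$ takes equal values at $x$ and $y$ while $f$ does not. Hence $\F\cup\{f\}$ is a $1$-separated family of norm-one functions strictly larger than $\F$, contradicting its maximality. Therefore $|\F|\geq\kappa$, and any subset of $\F$ of cardinality $\kappa$ is the desired set.

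I do not expect a genuine obstacle: the argument is strictly easier than the one for Theorem~\ref{t:jednaPlus}. The only points needing a little care are the reduction to the nonmetrizable case --- so that $|\F|<\kappa$ really does force $\F$ not to separate points via the cardinal identity --- and the remark that the ``bad'' pair $x,y$ automatically yields a function outside $\F$, which is what keeps the enlarged family strictly bigger. By contrast, in the $(1+)$-separated setting the same estimate only gives $1+|t|\geq 1$, with equality when $t=0$; that is precisely why condition (P1) and the dichotomy with a $\mathfrak c$-sized $2$-equilateral set were needed there.
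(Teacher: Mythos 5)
Your proof is correct and follows essentially the same route as the paper's: reduce to the nonmetrizable case, take a maximal $1$-separated family of norm-one functions, and use a point-separation failure together with a Urysohn function $f$ with $f(x)=1=-f(y)$ to contradict maximality. The explicit remark that $f\notin\F$ is a small but welcome extra detail the paper leaves implicit.
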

\begin{proof}By Theorem~\ref{t:dens}, we may assume that $K$ is nonmetrizable. Take a maximal $1$-separated family $\F$ (with respect to inclusion) of norm-one functions. We \emph{claim} that the cardinality of $\F$ equals $w(K)$. Suppose for contradiction that the cardinality of $\F$ is less than $w(K)$. Then $\F$ does not separate the points of $K$. Thus, for some pair of distinct points $x, y\in K$ and every $g\in\F$ we have $g(x) = g(y)$. Find a norm-one function $f\in\C(K)$ such that $f(x) = 1 = -f(y)$, then we have $\|f-g\| \geq 1$ for every $g\in\F$; hence $\F\cup\{f\}$ is $1$-separated, which is a contradiction with the maximality of $\F$.
\end{proof}

We get easily from the results in the previous section that the situation is quite simple under GCH.

\begin{corollary}[GCH]\label{t:LimitWeight} Let $K$ be a compact space.
\begin{enumerate}
	\item If $w(K)$ is a limit cardinal, then the unit ball of $\C(K)$ contains a $2$-equilateral set of cardinality $w(K)$.
    \item If $w(K) = \kappa^+$ for an infinite cardinal $\kappa$, then the unit ball of $\C(K)$ contains a $2$-equilateral set of cardinality $\kappa$.
\end{enumerate}
\end{corollary}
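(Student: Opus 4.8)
The plan is to deduce both parts directly from the results of Section~\ref{sekceDvaSeparovanost}, the hypothesis GCH entering only through two elementary facts of cardinal arithmetic. Part~(1) will be a consequence of the ``$w(K)$ is a strong limit cardinal'' clause of Corollary~\ref{c:densEqualsWeight}, and part~(2) of Corollary~\ref{c:oJednoNize} (which rests on the Erd\H{o}s--Rado theorem); so essentially no new Banach-space or topological work is needed. The whole argument amounts to checking that, under GCH, the cardinal hypotheses of those two corollaries are implied by ``$w(K)$ is a limit cardinal'' and ``$w(K)=\kappa^+$'', respectively.

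For (1), I would first record that under GCH every limit cardinal $\mu$ is a strong limit cardinal: if $\lambda<\mu$ is a cardinal then $\lambda^+<\mu$ (because $\mu$ is not a successor), and GCH gives $2^{\lambda}=\lambda^+<\mu$. Applying this with $\mu=w(K)$ and invoking the strong-limit-cardinal case of Corollary~\ref{c:densEqualsWeight} produces a $2$-equilateral set of cardinality $w(K)$ in the unit ball of $\C(K)$. The degenerate value $w(K)=\omega$ (i.e.\ $K$ metrizable) causes no trouble, as $\omega$ is itself a strong limit cardinal.

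For (2), write $w(K)=\kappa^+$ with $\kappa$ infinite. The key step is the computation $2^{<\kappa}=\kappa$ under GCH, which I would split into its two obvious sub-cases: if $\kappa=\mu^+$ is a successor then $2^{<\kappa}=2^{\mu}=\mu^+=\kappa$; if $\kappa$ is a limit cardinal then $2^{<\kappa}=\sup_{\lambda<\kappa}2^{\lambda}=\kappa$, since under GCH the values $2^{\lambda}$ for $\lambda<\kappa$ stay strictly below $\kappa$ yet are cofinal in it. Consequently $w(K)=\kappa^+=(2^{<\kappa})^+$, so the hypothesis $w(K)\geq(2^{<\kappa})^+$ of Corollary~\ref{c:oJednoNize} is met and we obtain a $2$-equilateral set of cardinality $\kappa$.

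There is no genuine obstacle in the proof; the only thing deserving a moment's care is the bookkeeping around $2^{<\kappa}$ in part~(2) --- separating the successor and limit sub-cases of $\kappa$ and confirming that the supremum defining $2^{<\kappa}$ equals exactly $\kappa$ rather than something larger --- together with the trivial remark that the boundary instances $w(K)=\omega$ in~(1) and $w(K)=\omega_1$ (where $\kappa=\omega$ and $2^{<\omega}=\omega$) in~(2) are already covered by the general argument.
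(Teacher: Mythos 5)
Your proposal is correct and follows essentially the same route as the paper: part (1) via the strong-limit clause of Corollary~\ref{c:densEqualsWeight} (GCH turning limit cardinals into strong limit cardinals), and part (2) via Corollary~\ref{c:oJednoNize} after computing $2^{<\kappa}=\kappa$ under GCH. The only cosmetic difference is that the paper evaluates $2^{<\kappa}=\sup\{2^\lambda\setsep\lambda<\kappa\}=\sup\{\lambda^+\setsep\lambda<\kappa\}=\kappa$ in one line, whereas you split into the successor and limit sub-cases of $\kappa$; both are fine.
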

\begin{proof}
The first statement follows immediately from Corollary~\ref{c:densEqualsWeight} because under GCH every limit cardinal is a strong limit cardinal.

Concerning the second statement, we may apply Corollary~\ref{c:oJednoNize}. Indeed, it is sufficient to notice that under GCH we have $2^{<\kappa} = \kappa$, which follows from the computation
\[
2^{<\kappa} = \sup\{2^\lambda\setsep \lambda < \kappa\}=\sup\{\lambda^+\setsep \lambda < \kappa\} = \kappa,
\]
where the first equality follows e.g. from \cite[Lemma I.13.17]{kunen} and the second from GCH.
\end{proof}

\begin{question}Does Corollary~\ref{t:LimitWeight} hold in ZFC?
\end{question}

Moreover, we do not know if it is possible to have an analogue of Koszmider's example \cite{Koszmider} for higher densities.

\begin{question}Let $\kappa \geq \omega_1$ be a cardinal. Does there (at least consistently) exist a compact space of weight $\kappa^+$ such that the unit sphere of $\C(K)$ does not contain a $2$-equilateral set of cardinality $\kappa^+$?
\end{question}



P. Koszmider proved \cite{Koszmider} that consistently there exists a nonmetrizable compact space $K$ without an uncountable equilateral set in $\C(K)$. By considering his construction in detail, it is quite easy to see that for Koszmider's example we have $\ind(K)\leq 2$, where $\ind(K)$ is the topological dimension (for a definition see e.g \cite[Chapter 7]{eng}). Since in zero-dimensional nonmetrizable compact spaces there always exists an uncountable $2$-equilateral set in the unit ball of $\C(K)$, it is of a certain interest to know what is the situation for compact spaces with dimension $1$. 

Modifying Koszmider's example, it is possible to obtain the following.

\begin{example}\label{e:last}It is relatively consistent with ZFC that there exists a nonmetrizable compact space $K$ with $\operatorname{ind}(K) = 1$ such that there does not exist an uncountable $2$-equilateral set in the unit ball of $\C(K)$. Moreover, the compact space $K$ is first countable, $w(K)=\omega_1$, it is hereditary separable and hereditary Lindel\"of.\end{example}

In the rest of this paper we outline the above mentioned modification of Koszmider's example. In order to shorten the notation, for a finite subset $N$ of $\omega$ and $s\in 2^N$, put $\NN_s:=\{x\in 2^\omega\setsep x|_N = s\}$. First, following the proof of \cite[Theorem 3.3]{Koszmider} (replacing the interval $[0,1]$ by the compact space $2^\omega$), one observes that it is sufficient to prove that consistently there are points $\{r_\xi\setsep \xi<\omega_1\}\subset 2^\omega$ and a sequence of functions $(f_\xi\setsep \xi<\omega_1)$, where $f_\xi:2^\omega\setminus\{r_\xi\}\to [-1,1]$ are continuous, such that given
\begin{itemize}
		\item[(a)] $m\in \N$,
        \item[(b)] a finite subset $N$ of $\omega$ and pairwise different sequences $s_1,\ldots,s_m\in 2^N$,
        \item[(c)] any sequence $(F_\alpha)_{\alpha<\omega_1}$ where $F_\alpha = \{\xi_1^\alpha,\ldots,\xi_m^\alpha\}$ are pairwise disjoint finite subsets of $\omega_1$ such that $r_{\xi_i^\alpha}\in\NN_{s_i}$ for every $1\leq i\leq m$ and every $\alpha < \omega_1$,
        \item[(d)] any $m$-tuple $\{q_1,\ldots,q_m\}$ of rational numbers from $[-1,1]$,
    \end{itemize}
 there are  $\alpha < \beta < \omega_1$, a finite subset $M\supset N$ of $\omega$ and sequences $(t_i^\alpha)_{1\leq i\leq m}$, and $(t_i^\beta)_{1\leq i\leq m}$ from $2^M$ such that for each $1\leq i\leq m$ we have:
 \begin{itemize}
 \item[(1)] $\NN_{t_i^\alpha}\cup \NN_{t_i^\beta}\subset \NN_{s_i}$ and $t_i^\alpha\neq t_i^\beta$,
 \item[(2)] $r_{\xi_i^\alpha}\in \NN_{t_i^\alpha}$ and $r_{\xi_i^\beta}\in \NN_{t_i^\beta}$,
 \item[(3)] $f_{\xi_i^\alpha}\restriction_{2^\omega\setminus(\NN_{t_i^\alpha}\cup \NN_{t_i^\beta})} = f_{\xi_i^\beta}\restriction_{2^\omega\setminus(\NN_{t_i^\alpha}\cup \NN_{t_i^\beta})}$,
 \item[(4)] $f_{\xi_i^\alpha}\restriction_{\NN_{t_i^\beta}} = q_i = f_{\xi_i^\beta}\restriction_{\NN_{t_i^\alpha}}$.
 \end{itemize}
 Now, similarly as in \cite[Section 4]{Koszmider}, by a forcing argument, we prove that consistently such points $\{r_\xi\setsep \xi< \omega_1\}\subset 2^\omega$ and a sequence of functions $(f_\xi\setsep \xi<\omega_1)$ exist.  Fix any points $\{r_\xi\setsep \xi< \omega_1\}\subset 2^\omega$. The forcing notion $\mathbb P$ consists of triples $(N_p,F_p,\F_p)$ such that
 \begin{itemize}
 \item[(1)] $N_p\in[\omega]^{<\omega}$,
 \item[(2)] $F_p$ is a finite subset of $\omega_1$ such that $\{r_\xi\restriction_{N_p}\setsep \xi\in F_p\}$ are pairwise different sequences,
 \item[(3)] $\F_P = \{f_p^\xi\setsep \xi\in F_p\}$,
 \item[(4)] $f_p^\xi:2^\omega\setminus\NN_{r_\xi\restriction_{N_p}}\to [-1,1]$ is a \emph{rationally piecewise constant function} for each $\xi\in F_p$ (i.e. for every $s\in 2^{N_p}$ with $s\neq r_\xi\restriction_{N_p}$ there is a rational number $q_s$ such that $f_p^\xi(x) = q_s$ for every $x\in \NN_s$).
 \end{itemize}
 We say that $q\leq p$ if and only if
 \begin{itemize}
 \item[(a)] $N_q\supset N_p$,
 \item[(b)] $F_q\supset F_p$, 
 \item[(c)] $f_q^\xi \supset f_p^\xi$ for every $\xi\in F_p$.
 \end{itemize}
 Similarly as in \cite[Lemma 4.3]{Koszmider} we prove that $\mathbb P$ is ccc; hence, it preserves cofinalities and cardinals \cite[Theorem IV.7.9]{kunen}. Finally, similarly as in \cite[Proposition 4.4]{Koszmider}, we prove that $\mathbb P$ forces that there are functions $(f_\xi\setsep \xi<\omega_1)$ with the properties indicated above.
 
As we have mentioned above, the modification is in replacing the interval $[0,1]$ by the compact space $2^\omega$ in the construction from \cite{Koszmider}; more precisely, it is a resolution given by the functions $(f_\xi)$ in the sense of \cite{wat}. In order to describe some more details, we recall the concept of a resolution below (we use the concept of a resolution from \cite{wat}; constructed compact spaces are easily seen to be homeomorphic to the ones considered in \cite{Koszmider}). Using verbatim the same arguments as in the original construction from \cite{Koszmider}, we may easily see that the constructed compact space $K$ has weight $\omega_1$, is first countable, hereditary separable, hereditary Lindel\"of, and that $\C(K)$ does not contain an uncountable equilateral set. It remains to show that our modification of Koszmider's example is $1$-dimensional.

 Let $L$ be a compact space, $B\subset L$ and for every $b\in B$ let us have a continuous function $f_b:L\setminus\{b\}\to [-1,1]$. By a \emph{resolution given by functions $(f_b)_{b\in B}$} we understand the space $K = R(L, (f_b)_{b\in B}) = (B\times [-1,1])\cup (L\setminus B)$ with the topology given by the following neighborhood basis. If $x\in L\setminus B$, then its neighborhood basis is the collection of all sets
 \[
 	\U(x,U):=\big((U\cap B)\times [-1,1]\big)\;\cup\; (U\setminus B),
 \]
 where $U$ is an open neighborhood of $x$ in the space $L$. If $x\in B$ and $y\in [-1,1]$, then the neighborhood basis at $(x,y)$ is the collection of all sets
 \[
 	\U(x, U, V):=(\{x\}\times V)\cup \big((U\cap f_x^{-1}(V)\cap B)\times [-1,1]\big)\cup (U\cap f_x^{-1}(V)\setminus B),
 \]
 where $U$ is an open neighborhood of $x$ in the space $L$ and $V$ is an open neighborhood of $y$ in the space $[-1,1]$.
 
 Finally, we prove the following proposition which yields that the above described modification gives a $1$-dimensional compact space.
 
 \begin{proposition}Let $L$ be a zero-dimensional compact space with countable character, $B\subset L$ and let $f_b:L\setminus\{b\}\to [-1,1]$ be a continuous function for every $b\in B$. Then the resolution $K=R(L, (f_b)_{b\in B})$ is a compact space with $\ind(K)\leq 1$. 
 \end{proposition}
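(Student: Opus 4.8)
The plan is to produce, for every point of $K$, a neighbourhood base consisting of open sets whose boundary is totally disconnected. Since the boundary of an open set is closed in $K$, hence compact, and since a compact Hausdorff space is zero-dimensional as soon as it is totally disconnected, such a base witnesses $\ind(K)\le 1$. First I would record the basic topology of $K$: it is compact Hausdorff (the general fact about resolutions of compact spaces), the projection $\pi\colon K\to L$ sending $(b,t)\mapsto b$ on $B\times[-1,1]$ and $x\mapsto x$ on $L\setminus B$ is a continuous closed surjection, and $\pi^{-1}(U)$ is clopen in $K$ for every clopen $U\subseteq L$. Because $L$ is zero-dimensional, every connected subset of $K$ lies in a single fibre of $\pi$; hence the fibres $\{b\}\times[-1,1]$ ($b\in B$) together with the singletons $\{x\}$ ($x\in L\setminus B$) are exactly the connected components of $K$, so a closed subset of $K$ is totally disconnected precisely when it meets every fibre in a totally disconnected set. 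Finally, $L$ first countable makes $K$ first countable, and each point of $L$ has a countable clopen neighbourhood base.

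For a point $x\in L\setminus B$ there is nothing to do: the sets $\pi^{-1}(U)$, with $U$ ranging over a countable clopen base at $x$ in $L$, form a clopen neighbourhood base at $x$ in $K$, with empty boundary. At a point $(b,y)$ with $b\in B$, I would fix a clopen neighbourhood $U$ of $b$ in $L$ and introduce the map $h=h_{b,U}\colon\pi^{-1}(U)\to[-1,1]$ defined by $h(b,t)=t$ and $h\equiv f_b(w)$ on the whole fibre over each $w\in U\setminus\{b\}$; the only nontrivial point, continuity at $(b,t)$, follows because a basic neighbourhood $\U(b,U',V')$ of $(b,t)$ is carried by $h$ into $V'$. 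One checks that $h^{-1}(O)=\U(b,U,O)$ for open $O\subseteq[-1,1]$, so these sets form a neighbourhood base at $(b,y)$ as $U$ and $O$ shrink, that $\partial\bigl(h^{-1}(O)\bigr)\subseteq h^{-1}\bigl(\partial_{[-1,1]}O\bigr)$, and that for closed $F\subseteq[-1,1]$
\[
h^{-1}(F)=\bigl(\{b\}\times F\bigr)\cup\pi^{-1}\bigl((f_b^{-1}(F)\cap U)\setminus B\bigr)\cup\bigcup_{b'\in f_b^{-1}(F)\cap U\cap B}\bigl(\{b'\}\times[-1,1]\bigr),
\]
where the first summand is a copy of $F$ in the fibre over $b$, the second is a subspace of $L$ (hence zero-dimensional), and the third is a union of whole fibres. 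A sharper look shows that $\partial(h^{-1}(O))$ is usually a proper subset of this: over a point $b'\in B\setminus\{b\}$ with $f_b(b')\in\partial O$ the boundary meets the fibre over $b'$ only in $\{b'\}\times E_{b'}$, with $E_{b'}$ the cluster set of $f_{b'}$ at $b'$ along $\{f_b\in O\}$.

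Everything thus reduces to choosing $U$ (inside a prescribed $U_0\ni b$) and an open $O$ (inside a prescribed $V_0\ni y$) so that $\partial(h^{-1}(O))$ meets every fibre in a totally disconnected set: over the singletons this is automatic, over the fibre above $b$ it holds once $\partial_{[-1,1]}O$ is totally disconnected (e.g.\ $O$ an interval), and the real issue is the fibres above the remaining points of $B$. When $f_b(B\cap U_0\setminus\{b\})$ has dense complement near $y$ — which is the situation in the application, since in (the modification of) Koszmider's example $f_b$ takes only rational values on the points of $B$ — one takes $U=U_0$ and $O=(\alpha,\beta)$ with $y\in(\alpha,\beta)\subseteq V_0$ and $\alpha,\beta\notin f_b(B\setminus\{b\})$; then no point of $B$ lies in $f_b^{-1}(\partial O)\cap U$, so $\partial(h^{-1}(O))\subseteq(\{b\}\times\{\alpha,\beta\})\cup\pi^{-1}\bigl((f_b^{-1}(\{\alpha,\beta\})\cap U)\setminus B\bigr)$ is a totally disconnected compact set and we are done. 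The main obstacle is the complementary case, where $f_b(B\cap U\setminus\{b\})$ is large (possibly all of $[-1,1]$) for every clopen $U\ni b$, so no cut level can be taken off $f_b(B)$; here one must exploit that the boundary of $\U(b,U,(\alpha,\beta))$ meets the fibre over $b'$ only in the cluster set $E_{b'}$ rather than the whole fibre, and show that the set of levels $\alpha$ for which some such $E_{b'}$ (with $f_b(b')=\alpha$) fails to be totally disconnected is small enough to be avoided by a suitable choice of $\alpha$ and $\beta$. Making this precise — tying the level sets of $f_b$ to the oscillation of the functions $f_{b'}$ — is the delicate part of the argument; the rest is routine bookkeeping with the resolution topology.
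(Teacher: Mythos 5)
Your framework is sound as far as it goes (components of $K$ lie in single fibres of $\pi$, so it suffices to make boundaries fibrewise totally disconnected; the map $h$ is continuous and $h^{-1}(O)=\U(b,U,O)$), but the proof has a genuine gap exactly where you flag ``the delicate part'': nothing is proved there, and nothing in the hypotheses makes it work. The proposition assumes only that $L$ is zero-dimensional and of countable character, $B\subset L$ is arbitrary and the $f_b$ are arbitrary continuous functions. For such data, every level $\alpha$ near $y$ may be a value $f_b(b')$ of some $b'\in B\cap U$, and for each such $b'$ the boundary of $\U(b,U,(\alpha,\beta))$ meets the fibre over $b'$ in the cluster set of $f_{b'}$ along $f_b^{-1}((\alpha,\beta))$ near $b'$, which can perfectly well be a nondegenerate interval for \emph{every} admissible $\alpha$ (let $f_b$ map $B\cap U$ onto a neighbourhood of $y$ and let each $f_{b'}$ oscillate over all of $[-1,1]$ on each set $f_b^{-1}((f_b(b'),f_b(b')+\tfrac1n))$). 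Your claim that ``the set of bad levels is small enough to be avoided'' is therefore not a routine verification but the whole content of the statement in this approach, and your fallback to the rationality of values in Koszmider's construction proves at best a special case, not the proposition as stated.

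The idea you are missing is to use the zero-dimensionality of $L$ not merely to take $U$ clopen, but to avoid cutting any fibre over $B\setminus\{b\}$ at all. Writing $x:=b$, fix a closed interval $W=[a_0,b_0]\subset V$ around $y$ and open sets $W_n$ with $W=\bigcap_n\overline{W_n}$, $W_0=V$; using a decreasing clopen base $(B_n)$ at $x$ one constructs a \emph{clopen} set $C\subset L\setminus\{x\}$ with $f_x^{-1}(W)\subset C\subset f_x^{-1}(V)$ and $x\notin\overline{f_x^{-1}([-1,1]\setminus W_n)\cap C}$ for all $n$ (separate the disjoint compact sets $f_x^{-1}(W)\setminus B_{n+1}$ and $f_x^{-1}([-1,1]\setminus W_n)\setminus B_{n+1}$ by clopen sets and glue along the annuli $B_n\setminus B_{n+1}$). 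Then the set $M=\bigl((C\cap U\cap B)\times[-1,1]\bigr)\cup(C\cap U\setminus B)\cup(\{x\}\times W)$ is a neighbourhood of $(x,y)$ inside $\U(x,U,V)$ which contains \emph{whole} fibres over $C\cap U\cap B$, so its boundary is contained in $\{(x,a_0),(x,b_0)\}$ --- finite, not merely totally disconnected. This sidesteps the cluster-set problem entirely, and it is here (and only here) that both hypotheses on $L$, zero-dimensionality and countable character, are actually used.
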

 \begin{proof}It is well-known that $K$ is a compact space \cite[Theorem 3.1.33]{wat}. We will find a neighborhood basis at every point in $K$ such that the boundary 
 of each of its members is finite (in particular zero-dimensional). Let $x\in L\setminus B$ first. Then the set $\U(x, U)$ is a clopen neighborhood of $x$ in $K$ for a clopen neighborhood $U$ of $x$ in $L$. Moreover, sets of this type form a local neighborhood basis at $x$ in $K$.
 
 On the other hand suppose that $(x,y)\in B\times [-1,1]$ and let $\U(x, U, V)$ be a given neighborhood of $(x,y)$ in $K$. We want to find a smaller neighborhood of $(x,y)$ in $K$ whose boundary is finite. We may assume that $U$ is a clopen set.
 Let $W=[a,b]$ be a neighborhood of $y$ in $[-1,1]$ such that $W\subset V$ and let $W_n$ be open subsets of $[-1,1]$ such that $W_{n+1}\subset W_n$ for every $n\in\omega$, $W=\bigcap_{n\in\omega} \overline{W_n}$ and $W_0=V$. We claim that there exists a clopen set $C$ in $L\setminus \{x\}$ such that $f_x^{-1}(W)\subset C\subset f_x^{-1}(V)$ and $x\notin \overline{f_x^{-1}([-1,1]\setminus W_n)\cap C}$ for every $n\in\omega$. Indeed, let $\{B_n\colon n\in\omega\}$ be a local neighborhood basis at $x$ formed by clopen sets in $L$ with $B_{n+1}\subset B_n$ for each $n\in\omega$ and $B_0 = L$. One can easily find a clopen set $C_n\subset L\setminus \{x\}$ such that $f_x^{-1}(W)\subset C_n\subset f_x^{-1}(W_n)$. Without loss of generality we may suppose that $C_0\supset C_1\supset \dots$.
 Set $C=\bigcup_{n\in\omega}(C_n\cap B_n\setminus B_{n+1})$.

 It follows that the set $M=((C\cap U\cap B)\times [-1,1])\cup(C\cap U\setminus B)\cup (\{x\}\times W)$ is a neighborhood of $(x,y)$ in $K$. We shall prove that its boundary is a subset of $\{(x,a), (x,b)\}$, hence it is finite.
 
 First, every point of $L\setminus B$ as well as every point $(x^\prime, y^\prime)\in (B\setminus \{x\}\times [-1,1])$ is either an interior point of $M$ or a point outside of the closure of $M$. Moreover, for $z\in W\setminus\{a,b\}$ we have that $(x,z)$ is in the interior of $M$ because $(x,z) \in \U(x,U,(a,b)) \subset M$.
 
Finally, for $z\in [-1,1]\setminus W$ the point $(x,z)$ is outside of the closure of $M$. Indeed, there is $n\in\omega$ such that $z\notin \overline{W_n}$.
Let $U^\prime$ be an open neighborhood of $x$ in $L$ such that $U^\prime\cap \overline{f_x^{-1}([-1,1]\setminus W_n)\cap C}=\emptyset$. Let $V^\prime$ be an open neighborhood of $z$ disjoint from $W_n$. Then $\U(x, U^\prime, V^\prime)$ is an open neighborhood of $(x,z)$ disjoint from $M$.
\end{proof}

\subsection*{Acknowledgements}
We would like to thank an anonymous referee for careful reading of the paper and numerous valuable remarks. We would like also to thank O.~Kalenda for suggesting Lemma~\ref{l:contImageOfValdivia}.

\end{document}